\theoremstyle{definition}
\newtheorem{theorem}{Theorem}[section]
\newtheorem{lemma}[theorem]{Lemma}
\newtheorem{proposition}[theorem]{Proposition}
\newtheorem{corollary}[theorem]{Corollary}
\newtheorem{definition}[theorem]{Definition}
\newtheorem{remark}[theorem]{Remark}
\definecolor{UF2}{HTML}{FA4616}
\definecolor{UF}{HTML}{0021A5}
\newcommand{\cross}{\begin{tikzpicture}[scale = .4]
	\draw[lightgray] (0,0) -- (1,0) -- (1,1) -- (0,1) -- (0,0);
	\draw[thick,UF!100!black] (.5,0) -- (.5,1);
	\draw[thick,UF!100!black] (0,.5) -- (1,.5);
\end{tikzpicture}}
\newcommand{\hwire}{\begin{tikzpicture}[scale = .4]
	\draw[lightgray] (0,0) -- (1,0) -- (1,1) -- (0,1) -- (0,0);
	\draw[thick,UF!100!black] (0,.5) -- (1,.5);
\end{tikzpicture}}
\newcommand{\vwire}{\begin{tikzpicture}[scale = .4]
	\draw[lightgray] (0,0) -- (1,0) -- (1,1) -- (0,1) -- (0,0);
	\draw[thick,UF!100!black] (.5,0) -- (.5,1);
\end{tikzpicture}}
\newcommand{\nowire}{\begin{tikzpicture}[scale = .4]
	\draw[lightgray] (0,0) -- (1,0) -- (1,1) -- (0,1) -- (0,0);
\end{tikzpicture}}
\newcommand{\shade}{\begin{tikzpicture}[scale = .4]
	\draw[fill=lightgray] (0,0) -- (1,0) -- (1,1) -- (0,1) -- (0,0);
\end{tikzpicture}}
\newcommand{\bump}{\begin{tikzpicture}[scale = .4]
	\draw[lightgray] (0,0) -- (1,0) -- (1,1) -- (0,1) -- (0,0);
	\draw[thick,UF!100!black] (0.5,0) to[out=90,in=180] (1,0.5);
	\draw[thick,UF!100!black] (0,0.5) to[out=0,in=-90] (0.5,1);
\end{tikzpicture}}
\newcommand{\jay}{\begin{tikzpicture}[scale = .4]
	\draw[lightgray] (0,0) -- (1,0) -- (1,1) -- (0,1) -- (0,0);
	\draw[thick,UF!100!black] (0,0.5) to[out=0,in=-90] (0.5,1);
\end{tikzpicture}}
\newcommand{\are}{\begin{tikzpicture}[scale = .4]
	\draw[lightgray] (0,0) -- (1,0) -- (1,1) -- (0,1) -- (0,0);
	\draw[thick,UF!100!black] (0.5,0) to[out=90,in=180] (1,0.5);
\end{tikzpicture}}
\newcommand{\inlinecross}{\begin{tikzpicture}[scale = .3]
	\draw[lightgray] (0,0) -- (1,0) -- (1,1) -- (0,1) -- (0,0);
	\draw[thick,UF!100!black] (.5,0) -- (.5,1);
	\draw[thick,UF!100!black] (0,.5) -- (1,.5);
\end{tikzpicture}}
\newcommand{\inlinenowire}{\begin{tikzpicture}[scale = .3]
	\draw[lightgray] (0,0) -- (1,0) -- (1,1) -- (0,1) -- (0,0);
\end{tikzpicture}}
\newcommand{\inlinebump}{\begin{tikzpicture}[scale = .3]
	\draw[lightgray] (0,0) -- (1,0) -- (1,1) -- (0,1) -- (0,0);
	\draw[thick,UF!100!black] (0.5,0) to[out=90,in=180] (1,0.5);
	\draw[thick,UF!100!black] (0,0.5) to[out=0,in=-90] (0.5,1);
\end{tikzpicture}}
\newcommand{\inlineare}{\begin{tikzpicture}[scale = .3]
	\draw[lightgray] (0,0) -- (1,0) -- (1,1) -- (0,1) -- (0,0);
	\draw[thick,UF!100!black] (0.5,0) to[out=90,in=180] (1,0.5);
\end{tikzpicture}}
\newcommand{\inlinejay}{\begin{tikzpicture}[scale = .3]
	\draw[lightgray] (0,0) -- (1,0) -- (1,1) -- (0,1) -- (0,0);
	\draw[thick,UF!100!black] (0,0.5) to[out=0,in=-90] (0.5,1);
\end{tikzpicture}}
\newcommand{\inlineshade}{\begin{tikzpicture}[scale = .3]
	\draw[fill=lightgray] (0,0) -- (1,0) -- (1,1) -- (0,1) -- (0,0);
\end{tikzpicture}}
\newcommand{\inlinehwire}{\begin{tikzpicture}[scale = .3]
	\draw[lightgray] (0,0) -- (1,0) -- (1,1) -- (0,1) -- (0,0);
	\draw[thick,UF!100!black] (0,.5) -- (1,.5);
\end{tikzpicture}}
\newcommand{\inlinevwire}{\begin{tikzpicture}[scale = .3]
	\draw[lightgray] (0,0) -- (1,0) -- (1,1) -- (0,1) -- (0,0);
	\draw[thick,UF!100!black] (.5,0) -- (.5,1);
\end{tikzpicture}}
\begin{document}

\title[]{Lenart's bijection via bumpless pipe dreams}
\author[A. Gregory]{Adam Gregory}
\address[AG]{Department of Mathematics, University of Florida, Gainesville, FL 32601}
\email{adamgregory@ufl.edu}

\author[Z. Hamaker]{Zachary Hamaker}
\address[ZH]{Department of Mathematics, University of Florida, Gainesville, FL 32601}
\email{zhamaker@ufl.edu}

\date{\today}

\begin{abstract}
Pipe dreams and bumpless pipe dreams for vexillary permutations are each known to be in bijection with certain semistandard tableaux via maps due to Lenart and Weigandt, respectively.
Recently, Gao and Huang have defined a bijection between the former two sets.
In this note we show for vexillary permutations that the Gao--Huang bijection preserves the associated tableaux, giving a new proof of Lenart's result.
Our methods extend to give a recording tableau for any bumpless pipe dream.
\end{abstract}

\maketitle

\section{Introduction}

Schubert polynomials are fundamental objects in algebraic combinatorics and enumerative algebraic geometry.
They have several combinatorial formulas, most notably the Billey-Jockusch-Stanley pipe dream formula~\cite{BB93,billey1993some}.
Schubert polynomials associated to vexillary permutations are flagged Schur functions, which have a formula in terms of flagged tableaux~\cite{wachs1985flagged}.
As part of an effort to extend this type of formula to all Schubert polynomials, Lenart described a bijection from pipe dreams to flagged tableaux~\cite[Rem.~4.12~(2)]{L04}.
Lenart's map relies on Edelman-Greene insertion~\cite{edelman1987balanced}, and he does not give a proof, remarking `the proof is somewhat technical, and therefore we decided to omit it.'
A special case of Lenart's map appears as~\cite[Thm.~3.3]{SS12}.

Recently Lam, Lee and Shimozono introduced a new combinatorial model for Schubert polynomials called the bumpless pipe dream formula~\cite{lam2021back}.
Building on work of Kreiman~\cite{kreiman2006schubert} and Knutson-Miller-Yong~\cite{knutson2009grobner}, Weigandt observed that bumpless pipe dreams for vexillary permutations are manifestly in bijection with flagged tableaux~\cite[Thm. 7.7]{weigandt2021bumpless}.
Later, Gao and Huang gave a bijection between bumpless pipe dreams and pipe dreams~\cite{gao2021canonical}.
By composing these maps, one obtains a new bijection from pipe dreams to flagged tableaux.
See Figure~\ref{fig:bijections} for an example of these maps.

Our main result is:
\begin{theorem}[See Theorem~\ref{t:main}]
\label{t:intro}
The composition of the Weigandt and Gao-Huang bijections is equal to Lenart's bijection.
\end{theorem}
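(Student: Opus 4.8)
The plan is to fix a vexillary permutation $w$ and to prove the identity pointwise: for every bumpless pipe dream $B$ of $w$, Weigandt's flagged tableau $\Phi_W(B)$ equals Lenart's flagged tableau $\Phi_L(\Phi_{GH}(B))$, where $\Phi_{GH}$ is the Gao--Huang bijection carrying $B$ to an ordinary pipe dream. Since all three maps are bijections onto the flagged tableaux of the shape $\lambda(w)$ attached to $w$, this pointwise equality is equivalent to the asserted equality of compositions. First I would recall the explicit descriptions of the three maps: $\Phi_W$ reads a flagged tableau directly off the combinatorial structure of $B$ column by column; $\Phi_L$ reads the crossings of a pipe dream in a fixed order into a reduced word, applies Edelman--Greene insertion, and reinterprets the recording tableau $Q$ as a flagged tableau; and $\Phi_{GH}$ converts $B$ into a pipe dream through a canonical sequence of local droop moves.

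The central device is a recording tableau $Q(B)$ defined directly on the bumpless pipe dream and built up as the Gao--Huang algorithm processes the tiles of $B$. The first step is to show that for vexillary $w$ this intrinsic tableau agrees with Weigandt's, $Q(B) = \Phi_W(B)$; this is a fairly direct comparison of the column-reading rule for $\Phi_W$ against the order in which the moves visit the tiles of $B$. The substance of the proof is then an analysis of the moves themselves: I would show that each droop step contributes a single box to the recording tableau produced by Edelman--Greene insertion on the evolving word, and that this box is exactly the one recorded by the move. Chaining these equalities across the whole algorithm identifies the Edelman--Greene recording tableau of the output pipe dream $\Phi_{GH}(B)$ with $Q(B)$, which is precisely $\Phi_L(\Phi_{GH}(B))$.

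The hard part will be controlling the Edelman--Greene bumping. A single droop alters the underlying reduced word, and in general such a change can set off a bumping cascade whose net effect on the recording tableau is difficult to localize. The key simplification I would exploit is that the insertion tableau is constant --- it is the fixed tableau $P_{EG}(w)$ determined by $w$ --- so only the recording tableau varies, and for the restricted geometry of a vexillary bumpless pipe dream the word should change only by a commutation or a single Coxeter--Knuth controlled move. Making this precise, so that each local move produces exactly the predicted change in $Q$, is where essentially all the technical effort lies.

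Finally, I would note that the construction of $Q(B)$ and the move-by-move bookkeeping do not use vexillarity in an essential way; the same recipe therefore attaches a recording tableau to an arbitrary bumpless pipe dream, giving the extension stated in the abstract and in the full Theorem~\ref{t:main}.
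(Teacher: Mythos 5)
There is a genuine gap, and it sits exactly where you say ``essentially all the technical effort lies.'' Your plan rests on the claim that each local move of the Gao--Huang algorithm changes the Edelman--Greene recording tableau by a single, predictable box, with the word changing ``only by a commutation or a single Coxeter--Knuth controlled move.'' This does not match how the algorithm actually transforms the biword. The map $\varphi$ is built by iterating $\nabla$: one application of $\nabla$ (itself a long chain of droops and undroops) deletes the \emph{first} biletter of the compatible sequence (Lemma~\ref{l:PhiNab}). The effect of deleting the initial letter on $\widetilde{Q}$ is not the removal of a box or a local move --- it is a full jeu de taquin rectification, and this is only under control when Edelman--Greene insertion coincides with RSK, i.e.\ when rule 2(a) of Definition~\ref{d:eg-insert} never fires. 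That happens for Grassmannian words (Proposition~\ref{p:Sagan}) but fails for general vexillary ones, where a deletion can trigger exactly the nonlocal bumping cascade you acknowledge. Your observation that the insertion tableau is constant for vexillary $w$ is correct but does not tame the recording tableau under these deletions; it gives no mechanism for the ``one move = one box'' bookkeeping you need to chain. In short, your proposal re-enters the direct approach whose technicality led Lenart to omit his proof, and leaves its crux unproved.

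The paper avoids the cascade entirely by a two-step reduction you should compare against. First, the Grassmannian case (Theorem~\ref{t:Grassmannian}) is proved by induction on length, playing off two jdt compatibilities: Proposition~\ref{p:Sagan} on the pipe-dream side together with Lemma~\ref{l:PhiNab}, and Huang's Proposition~\ref{p:HuangCor}, $\text{jdt}(\gamma(B)) = \gamma(\nabla B)$, on the bumpless side. Second, the vexillary case is transferred to the Grassmannian one: Proposition~\ref{p:Little-Grassmannian} supplies a sequence of Little bumps carrying $\varphi(B)$ to a Grassmannian compatible sequence, each Little bump preserves $\widetilde{Q}$ (Proposition~\ref{p:hamaker-young}, the Hamaker--Young result), the Gao--Huang bijection intertwines Little bumps with Huang bumps (Theorem~\ref{t:canonical}), and Huang bumps between vexillary bumpless pipe dreams preserve $\gamma$ because they never touch tiles inside $Y_{\rho(v)}$ (Corollary~\ref{c:huang-vexillary}). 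Note also that your closing remark misfires: Weigandt's $\gamma$ is not even defined for non-vexillary $w$, so the extension to arbitrary bumpless pipe dreams cannot be ``the same recipe without vexillarity''; in the paper it is obtained by bumping all the way to a Grassmannian permutation (Definition~\ref{d:huang-bijection} and Corollary~\ref{c:recording-huang}), not by reading a tableau off $B$ directly.
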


As a consequence, we obtain the first explicit proof of Lenart's bijection, avoiding many of the technicalities required in a direct approach at the expense of drawing on a larger body of prior work.
To prove Theorem~\ref{t:intro}, we first demonstrate the Grassmannian case (see Theorem~\ref{t:Grassmannian}).
We then use combinatorial recurrences on pipe dreams~\cite{billey2019bijective,little2003combinatorial} and bumpless pipe dreams~\cite{huang2020bijective} to extend this to the vexillary case.
In doing so, we make critical use of the `canonical' nature of the Gao-Huang bijection and a result of the second author and Young~\cite{hamaker2014relating}.
From this perspective, our work extends the Little bijection~\cite{little2003combinatorial} to give an intrinsic description of `recording' tableaux for bumpless pipe dreams (see Section~\ref{s:recording}).
A direct generalization of this map will appear in forthcoming work by Huang, Shimozono and Yu~\cite{private}.

\

\noindent \textbf{Outline:}
In the second section, we introduce the objects studied in this paper and their properties: permutations and reduced words, flagged tableaux, reduced compatible sequences~\cite{billey1993some} and pipe dreams~\cite{BB93}, bumpless pipe dreams~\cite{lam2021back,weigandt2021bumpless}.
The third section describes properties of the various maps that we require: the Edelman-Greene correspondence~\cite{edelman1987balanced} and Lenart's bijection~\cite{L04}, the Gao-Huang bijection between pipe dreams and bumpless pipe dreams~\cite{gao2021canonical},  the Little bijection~\cite{little2003combinatorial}, Huang's combinatorial realization of Monk's formula~\cite{huang2020bijective} (precise definitions appear in the appendix).
We prove our main results in the fourth section.
The fifth section discusses an extension of~\cite{huang2020bijective} and some forthcoming work.

\ 

\noindent \textbf{Acknowledgements:}
We thank Joshua Arroyo, Daoji Huang and Tianyi Yu for helpful conversations.
We are especially grateful to Hugh Dennin for sharing his code for bumpless pipe dreams and related maps and to Anna Weigandt for suggesting this problem.

\section{Objects}

Let $[n] = \{1,2,\dots,n\}$.

\subsection{Permutations}


For our purposes, a \emph{permutation} $w$ is a bijection of the positive integers so that $w(k) = k$ for all but finitely many $k$.
Let $S_\infty$ denote the set of all permutations.
The \emph{support} of $w \in S_\infty$ is $\text{supp}(w) = \{k: w(k) \neq k \}$.
With these conventions, the usual symmetric group is $S_n = \{w \in S_\infty: \text{supp}(w) \subseteq [n]\}$.
For $w \in S_n$, we have the one-line notation $w = w(1) \ldots w(n)$. 


Given integers $j > i \geq 1$, let $t_{ij}$ be the \emph{transposition} of the integers $i$ and $j$.
Similarly, let $s_i = t_{i\:i+1}$ denote a \emph{simple transposition}.
It is well-known that every $w \in S_\infty$ can be written as a product of simple transpositions. 
For $w = s_{a_1} \dots s_{a_p}$ an expression of a minimal length, we call $\textbf{a} = (a_1,\dots,a_p)$ a \emph{reduced word} for $w$.
Let $\text{Red}(w)$ denote the set of reduced words for $w$ and $\ell(w)$ be the \emph{length} of a reduced word.

For $w \in S_\infty$, a \emph{descent} is a value $k$ such that $w(k) > w(k+1)$.
A permutation $w$ is called \emph{Grassmannian} if it has at most one descent.
A permutation $w$ is \emph{vexillary} if for all $1 \leq i < j < k < \ell$, we do not have $w(j) < w(i) < w(\ell) < w(k)$.
Equivalently, vexillary permutations are those avoiding the pattern 2143.
Note every Grassmannian permutation is also vexillary.

The \emph{code} of $w \in S_\infty$ is $c(w) = (c_1(w), \ldots, c_n(w))$ where 
\[c_i(w) = |\{j : j > i \; \text{and} \; w(i) > w(j)\}|.
\]
The \emph{shape} of $w$ is the integer partition $\lambda(w)$ obtained by rearranging $c(w)$ to be weakly decreasing. 
For example, $c(1432) = (0,2,1,0)$ and so $\lambda(1432) = (2,1)$.

\subsection{Diagrams and Tableaux}

A \emph{diagram} is a subset of $[n] \times [n]$. We use matrix coordinates for these arrays, so row indices increase from top-to-bottom and column indices increase from left-to-right. 
The \emph{Rothe diagram} of a permutation $w \in S_n$ is
\[
D_w = \{(i,w(j)) : i < j \; \text{and} \; w(i) > w(j) \}.
\]
The \emph{Young diagram} of an integer partition $\lambda = (\lambda_1, \lambda_2, \ldots)$ given by
\[
Y_\lambda = \{(i,j) : 1 \leq j \leq \lambda_i\}.
\]

A \emph{Young tableau} is a positive integer filling of a Young diagram.
It is \emph{semi-standard} if its filling increases weakly across rows and strictly down columns.
The \emph{shape} of a tableau is its underlying partition.
For $\lambda = (\lambda_1,\dots,\lambda_\ell)$ a partition, let $\text{SSYT}(\lambda)$ denote the set of semi-standard Young tableaux with shape $\lambda$. 
Similarly, $\text{SSYT}_k(\lambda)$ is the subset of semi-standard tableaux with maximum entry ${\leq k}$.
More generally, for $\phi = (\phi_1, \ldots, \phi_\ell)$ positive integers,  define
\[
\text{SSYT}_\phi(\lambda) = \{T \in \text{SSYT}(\lambda): T_{i\lambda_i} \leq \phi_i \ \mbox{for}\ i \in [\ell]\}
\]
where $T_{ij}$ is the filling in $(i,j)$.
We call $\phi$ a \emph{flag} and $\text{SSYT}_\phi(\lambda)$ the set of \emph{$\phi$-flagged semi-standard Young tableau} of shape $\lambda$.
For example,

\[\text{SSYT}_{(2,3)}(2,1) = \left\{\ytableausetup{smalltableaux}
\begin{ytableau}
1 & 1 \\ 2
\end{ytableau}, \quad 
\begin{ytableau}
1 & 1 \\ 3
\end{ytableau}, \quad 
\begin{ytableau}
1 & 2 \\ 2
\end{ytableau}, \quad 
\begin{ytableau}
1 & 2 \\ 3
\end{ytableau}, \quad 
\begin{ytableau}
2 & 2 \\ 3
\end{ytableau} 
\right\}.
\]

Each vexillary permutation has a flag $\phi(v) = (\phi_1, \ldots, \phi_m)$ associated to it in the following way.
Let $v$ be a vexillary permutation with Rothe diagram $D_v$ and shape $\lambda(v) = (\lambda_1, \ldots, \lambda_m)$.
Then $\phi_i$ is equal to the row index of the southeastern-most box in $D_v$ which lies in the same diagonal as $(i,\lambda_i)$.
If $v$ is Grassmannian with $\text{Des}(v) = \{k\}$ one can check that $\phi(v) = (k, \ldots, k)$.
For example, with $w = 35142$ we see $\text{code}(w) = (2,3,0,1,0)$ so that $\lambda(w) = (3,2,1)$.
Then $\phi(w) = (2,2,4)$:

\[
\begin{tikzpicture}[scale=0.4]
\node at (1,1) {\nowire};
\node at (0,3) {\nowire};
\node at (0,4) {\nowire};
\node at (1,3) {\nowire};
\node at (1,4) {\nowire};
\node at (3,3) {\nowire};
\draw[dashed] (-0.5,-0.5) -- (-0.5,4.5) -- (4.5,4.5) -- (4.5,-0.5) -- (-0.5,-0.5);
\end{tikzpicture} \quad 
\begin{tikzpicture}[scale=0.4]
\node at (1,1) {\nowire};
\node at (0,3) {\nowire};
\node at (0,4) {\nowire};
\node at (1,3) {\nowire};
\node at (1,4) {\nowire};
\node at (3,3) {\nowire};
\draw[dashed] (-0.5,-0.5) -- (-0.5,4.5) -- (4.5,4.5) -- (4.5,-0.5) -- (-0.5,-0.5);
\draw[red] (-0.5,1.5) -- (0.5,1.5) -- (0.5,2.5) -- (1.5,2.5) -- (1.5,3.5) -- (2.5,3.5) -- (2.5,4.5) -- (-0.5,4.5) -- (-0.5,1.5);
\end{tikzpicture} \quad 
\begin{tikzpicture}[scale=0.4]
\node at (1,1) {\shade};
\node at (0,2) {\shade};
\node at (0,3) {\nowire};
\node at (0,4) {\nowire};
\node at (1,3) {\shade};
\node at (1,4) {\nowire};
\node at (2,4) {\shade};
\node at (3,3) {\shade};
\node at (1,1) {4};
\node at (1,3) {2};
\node at (3,3) {2};

\draw[dashed] (-0.5,-0.5) -- (-0.5,4.5) -- (4.5,4.5) -- (4.5,-0.5) -- (-0.5,-0.5);
\draw[red] (-0.5,1.5) -- (0.5,1.5) -- (0.5,2.5) -- (1.5,2.5) -- (1.5,3.5) -- (2.5,3.5) -- (2.5,4.5) -- (-0.5,4.5) -- (-0.5,1.5);
\draw[->] (2.5,3.5) -- (2,4);
\draw[->] (0.5,1.5) -- (0,2);
\end{tikzpicture}
\]

For partitions $\mu$ and $\lambda$ say $\mu \subseteq \lambda$ if $Y_\mu \subseteq Y_\lambda$.
Given $\mu \subseteq \lambda$, the associated \emph{skew diagram} is $Y_{\lambda /\mu} = Y_\lambda \setminus Y_\mu$.
A \emph{skew tableau} of shape $\lambda / \mu$ is a filling of $Y_{\lambda /\mu}$ with positive integers.
For a tableau $T$ of shape $\lambda$ and $\mu \subseteq \lambda$, let $T/\mu$ be the skew tableau obtained by restricting $T$ to the skew diagram $Y_{\lambda / \mu}$. 

Let $T$ be a skew tableau of shape $\lambda / \mu$.
An \emph{outer corner} of $T$ is a rightmost box in some row of $Y_\lambda$.
An \emph{inner corner} of $T$ is a rightmost box in some row of $Y_\mu$.
\emph{Jeu de taquin} is an invertible process on skew tableaux that adds a distinguished inner corner and removes an outer corner according to the following sliding rule: starting with the inner corner, repeatedly compare the fillings to the right and below the empty tile and swap places with the smaller of the two, breaking ties by moving down and treating boxes outside the tableau as being filled with $\infty$.
To perform \emph{rectification} on $T$, do jeu de taquin one at a time to every cell in $Y_\mu$.
Let $\text{rect}(T)$ denote the rectification of $T$. 
See~\cite[\S1.2]{fulton1997young} for precise definitions of these maps.

Given a tableau $T$, let $\text{jdt}(T) = \text{rect}(T/(1))$.
For example, with

\[
T = 
\begin{ytableau}
1 & 1 & 3 & 4 \\
2 & 4 & 4 & 5\\
3 & 5
\end{ytableau}\]
we compute (with $\bullet$ indicating the empty tile)
\[
\begin{ytableau}
\none[\bullet] & 1 & 3 & 4 \\
2 & 4 & 4 & 5\\
3 & 5
\end{ytableau} \rightarrow
\begin{ytableau}
1 & \none[\bullet] & 3 & 4 \\
2 & 4 & 4 & 5 \\
3 & 5
\end{ytableau}  \rightarrow 
\begin{ytableau}
1 & 3 & \none[\bullet] & 4 \\
2 & 4 & 4 & 5 \\
3 & 5
\end{ytableau}  \rightarrow 
\begin{ytableau}
1 & 3 & 4 & 4 \\
2 & 4 & \none[\bullet] & 5 \\
3 & 5
\end{ytableau}  \rightarrow 
\begin{ytableau}
1 & 3 & 4 & 4 \\
2 & 4 & 5 & \none[\bullet] \\
3 & 5
\end{ytableau}
= \text{jdt}(T).
\]
Since jeu de taquin slides are invertible, we can invert $\text{jdt}(T)$ when we know the shape of $T$ and the filling removed from its top left corner.

\subsection{Reduced Compatible Sequences and Pipe Dreams} 

The Billey-Jockusch-Stanley formula for Schubert polynomials is given in terms of certain biwords called reduced compatible sequences~\cite{billey1993some}.
\begin{definition}
A \emph{reduced compatible sequence} for a permutation $w$ is a biword $\binom{\mathbf{r}}{\mathbf{a}} = \binom{r_1, \ldots, r_\ell}{a_1, \ldots, a_\ell}$ such that
\begin{itemize}
\item[(i)] $\mathbf{a}$ is a reduced word for $w$,
\item[(ii)] $r_i \leq r_{i+1}$ for each $1 \leq i \leq \ell$,
\item[(iii)] $r_i \leq a_i$ for each $1 \leq i \leq \ell$, and
\item[(iv)] $r_i < r_{i+1}$ whenever $a_i < a_{i+1}$.
\end{itemize}
\end{definition}

In~\cite{BB93}, Bergeron and Billey gave a diagrammatic representation of reduced compatible sequences that are now commonly known as pipe dreams.
For $n \geq 2$ let $\delta_n = (n-1,n-2,\ldots,2,1)$.
\begin{definition}
A (\emph{reduced}) \emph{pipe dream} is a filling of the staircase diagram $Y_{\delta_n}$ using the tiles 
\[
\cross \quad \text{and} \quad \bump
\]
in such a way that $n$ pipes enter from the top and exit to the left without any pair of pipes crossing more than once.
\end{definition}

\[
\begin{tikzpicture}[scale=0.4]
\node at (0,0) {\jay};

\node at (0,1) {\bump};
\node at (1,1) {\jay};

\node at (0,2) {\bump};
\node at (1,2) {\cross};
\node at (2,2) {\jay};

\node at (0,3) {\bump};
\node at (1,3) {\bump};
\node at (2,3) {\bump};
\node at (3,3) {\jay};

\node at (0,4) {\cross};
\node at (1,4) {\bump};
\node at (2,4) {\cross};
\node at (3,4) {\bump};
\node at (4,4) {\jay};

\draw[thick] (-0.5,-0.5) -- (-0.5,4.5) -- (4.5,4.5);

\node at (0,5) {1};
\node at (1,5) {2};
\node at (2,5) {3};
\node at (3,5) {4};
\node at (4,5) {5};

\node at (-1,4) {2};
\node at (-1,3) {1};
\node at (-1,2) {4};
\node at (-1,1) {5};
\node at (-1,0) {3};

\node at (2,-2) {Example};
\end{tikzpicture} \; \hspace{0.5in} \;
\begin{tikzpicture}[scale=0.4]
\node at (0,0) {\jay};

\node at (0,1) {\bump};
\node at (1,1) {\jay};

\node at (0,2) {\bump};
\node at (1,2) {\cross};
\draw[red,thick] (1,2) circle (0.4);
\node at (2,2) {\jay};

\node at (0,3) {\bump};
\node at (1,3) {\bump};
\node at (2,3) {\cross};
\draw[red,thick] (2,3) circle (0.4);
\node at (3,3) {\jay};

\node at (0,4) {\bump};
\node at (1,4) {\bump};
\node at (2,4) {\cross};
\node at (3,4) {\bump};
\node at (4,4) {\jay};

\draw[thick] (-0.5,-0.5) -- (-0.5,4.5) -- (4.5,4.5);

\node at (0,5) {1};
\node at (1,5) {2};
\node at (2,5) {3};
\node at (3,5) {4};
\node at (4,5) {5};

\node at (-1,4) {1};
\node at (-1,3) {2};
\node at (-1,2) {4};
\node at (-1,1) {5};
\node at (-1,0) {3};

\node at (2,-2) {Non-example};
\end{tikzpicture}
\]

Pipes are labeled $1$ through $n$ from left-to-right across the top, and the order in which they exit from top-to-bottom gives the \emph{permutation} of a pipe dream.
For $w \in S_\infty$, let $\text{PD}(w)$ denote the set of all pipe dreams whose permutation is $w$.

The correspondence between pipe dreams and reduced compatible sequences is as follows.
Given $P$ in $\text{PD}(w)$, map the locations $(i,j)$ of each $\inlinecross$-tile in $P$ to $\binom{i}{j+i-1}$ and concatenate these in the order of right-to-left and top-to-bottom.
For the inverse, the reduced compatible sequence $\binom{r_1, \ldots, r_\ell}{a_1, \ldots, a_\ell}$ maps to the pipe dream having $\inlinecross$-tiles in locations $(r_i, a_i - r_i + 1)$.

\[
\begin{tikzpicture}[scale=0.4]
\node at (0,0) {\jay};

\node at (0,1) {\cross};
\node at (1,1) {\jay};

\node at (0,2) {\cross};
\node at (1,2) {\cross};
\node at (2,2) {\jay};

\node at (0,3) {\bump};
\node at (1,3) {\bump};
\node at (2,3) {\bump};
\node at (3,3) {\jay};

\draw[thick] (-0.5,-0.5) -- (-0.5,3.5) -- (3.5,3.5);

\node at (0,4) {1};
\node at (1,4) {2};
\node at (2,4) {3};
\node at (3,4) {4};

\node at (-1,3) {1};
\node at (-1,2) {4};
\node at (-1,1) {3};
\node at (-1,0) {2};

\node at (3,0) {$\binom{2,2,3}{3,2,3}$};
\end{tikzpicture} \; 
\begin{tikzpicture}[scale=0.4]
\node at (0,0) {\jay};

\node at (0,1) {\cross};
\node at (1,1) {\jay};

\node at (0,2) {\cross};
\node at (1,2) {\bump};
\node at (2,2) {\jay};

\node at (0,3) {\bump};
\node at (1,3) {\bump};
\node at (2,3) {\cross};
\node at (3,3) {\jay};

\draw[thick] (-0.5,-0.5) -- (-0.5,3.5) -- (3.5,3.5);

\node at (0,4) {1};
\node at (1,4) {2};
\node at (2,4) {3};
\node at (3,4) {4};

\node at (-1,3) {1};
\node at (-1,2) {4};
\node at (-1,1) {3};
\node at (-1,0) {2};

\node at (3,0) {$\binom{1,2,3}{3,2,3}$};
\end{tikzpicture} \; 
\begin{tikzpicture}[scale=0.4]
\node at (0,0) {\jay};

\node at (0,1) {\bump};
\node at (1,1) {\jay};

\node at (0,2) {\cross};
\node at (1,2) {\cross};
\node at (2,2) {\jay};

\node at (0,3) {\bump};
\node at (1,3) {\cross};
\node at (2,3) {\bump};
\node at (3,3) {\jay};

\draw[thick] (-0.5,-0.5) -- (-0.5,3.5) -- (3.5,3.5);

\node at (0,4) {1};
\node at (1,4) {2};
\node at (2,4) {3};
\node at (3,4) {4};

\node at (-1,3) {1};
\node at (-1,2) {4};
\node at (-1,1) {3};
\node at (-1,0) {2};

\node at (3,0) {$\binom{1,2,2}{2,3,2}$};
\end{tikzpicture} \;
\begin{tikzpicture}[scale=0.4]
\node at (0,0) {\jay};

\node at (0,1) {\cross};
\node at (1,1) {\jay};

\node at (0,2) {\bump};
\node at (1,2) {\bump};
\node at (2,2) {\jay};

\node at (0,3) {\bump};
\node at (1,3) {\cross};
\node at (2,3) {\cross};
\node at (3,3) {\jay};

\draw[thick] (-0.5,-0.5) -- (-0.5,3.5) -- (3.5,3.5);

\node at (0,4) {1};
\node at (1,4) {2};
\node at (2,4) {3};
\node at (3,4) {4};

\node at (-1,3) {1};
\node at (-1,2) {4};
\node at (-1,1) {3};
\node at (-1,0) {2};

\node at (3,0) {$\binom{1,1,3}{3,2,3}$};
\end{tikzpicture} \; 
\begin{tikzpicture}[scale=0.4]
\node at (0,0) {\jay};

\node at (0,1) {\bump};
\node at (1,1) {\jay};

\node at (0,2) {\bump};
\node at (1,2) {\cross};
\node at (2,2) {\jay};

\node at (0,3) {\bump};
\node at (1,3) {\cross};
\node at (2,3) {\cross};
\node at (3,3) {\jay};

\draw[thick] (-0.5,-0.5) -- (-0.5,3.5) -- (3.5,3.5);

\node at (0,4) {1};
\node at (1,4) {2};
\node at (2,4) {3};
\node at (3,4) {4};

\node at (-1,3) {1};
\node at (-1,2) {4};
\node at (-1,1) {3};
\node at (-1,0) {2};

\node at (3,0) {$\binom{1,1,2}{3,2,3}$};
\end{tikzpicture}
\]
Due to this correspondence, we use pipe dreams and reduced compatible sequences interchangebly. 

\subsection{Bumpless Pipe Dreams}
Recently, Lam, Lee and Shimozono gave a new diagrammatic formula for Schubert polynomials in terms of diagrams called bumpless pipe dreams~\cite{lam2021back}.
As Weigandt observed~\cite{weigandt2021bumpless}, their formula is equivalent to a prior formula due to Lascoux~\cite{lascoux2008chern}.

\begin{definition}
A (\emph{reduced}) \emph{bumpless pipe dream} is a filling of an $n \times n$ grid using tiles from
\[
 \nowire \quad \vwire \quad \hwire \quad \are \quad \jay \quad \cross 
\]
in such a way that $n$ pipes enter from the bottom and exit to the right without any pair of pipes crossing more than once.
\end{definition}

\[
\begin{tikzpicture}[scale=0.4]
\node at (0,0) {\vwire};
\node at (1,0) {\vwire};
\node at (2,0) {\are};
\node at (3,0) {\cross};
\node at (4,0) {\cross};

\node at (0,1) {\vwire};
\node at (1,1) {\are};
\node at (2,1) {\jay};
\node at (3,1) {\vwire};
\node at (4,1) {\are};

\node at (0,2) {\are};
\node at (1,2) {\jay};
\node at (2,2) {\vwire};
\node at (3,2) {\are};
\node at (4,2) {\hwire};

\node at (0,3) {\nowire};
\node at (1,3) {\are};
\node at (2,3) {\cross};
\node at (3,3) {\hwire};
\node at (4,3) {\hwire};

\node at (0,4) {\nowire};
\node at (1,4) {\nowire};
\node at (2,4) {\are};
\node at (3,4) {\hwire};
\node at (4,4) {\hwire};

\draw[thick] (-0.5,-0.5) -- (-0.5,4.5) -- (4.5,4.5) -- (4.5,-0.5) -- (-0.5,-0.5);

\node at (0,-1) {1};
\node at (1,-1) {2};
\node at (2,-1) {3};
\node at (3,-1) {4};
\node at (4,-1) {5};

\node at (5,4) {2};
\node at (5,3) {1};
\node at (5,2) {4};
\node at (5,1) {5};
\node at (5,0) {3};

\node at (2,-2.5) {Example};
\end{tikzpicture} \; \hspace{0.5in} \;
\begin{tikzpicture}[scale=0.4]
\node at (0,0) {\vwire};
\node at (1,0) {\vwire};
\node at (2,0) {\are};
\node at (3,0) {\cross};
\node at (4,0) {\cross};

\node at (0,1) {\are};
\node at (1,1) {\cross};
\draw[red,thick] (1,1) circle (0.4);
\node at (2,1) {\jay};
\node at (3,1) {\vwire};
\node at (4,1) {\are};

\node at (0,2) {\nowire};
\node at (1,2) {\vwire};
\node at (2,2) {\vwire};
\node at (3,2) {\are};
\node at (4,2) {\hwire};

\node at (0,3) {\nowire};
\node at (1,3) {\are};
\node at (2,3) {\cross};
\draw[red,thick] (2,3) circle (0.4);
\node at (3,3) {\hwire};
\node at (4,3) {\hwire};

\node at (0,4) {\nowire};
\node at (1,4) {\nowire};
\node at (2,4) {\are};
\node at (3,4) {\hwire};
\node at (4,4) {\hwire};

\draw[thick] (-0.5,-0.5) -- (-0.5,4.5) -- (4.5,4.5) -- (4.5,-0.5) -- (-0.5,-0.5);

\node at (0,-1) {1};
\node at (1,-1) {2};
\node at (2,-1) {3};
\node at (3,-1) {4};
\node at (4,-1) {5};

\node at (5,4) {1};
\node at (5,3) {2};
\node at (5,2) {4};
\node at (5,1) {5};
\node at (5,0) {3};

\node at (2,-2.5) {Non-example};
\end{tikzpicture} 
\]

The pipes are labeled $1$ to $n$ from left-to-right across the bottom, and the order in which they exit from top-to-bottom determines the \emph{permutation} of a bumpless pipe dream.
Given $w \in S_\infty$, let $\text{BPD}(w)$ denote the set of all bumpless pipe dreams with permutation $w$.
Below are the elements of $\text{BPD}(1432)$:
\[
\begin{tikzpicture}[scale=0.4]
\node at (0,0) {\vwire};
\node at (1,0) {\are};
\node at (2,0) {\cross};
\node at (3,0) {\cross};

\node at (0,1) {\vwire};
\node at (1,1) {\nowire};
\node at (2,1) {\are};
\node at (3,1) {\cross};

\node at (0,2) {\vwire};
\node at (1,2) {\nowire};
\node at (2,2) {\nowire};
\node at (3,2) {\are};

\node at (0,3) {\are};
\node at (1,3) {\hwire};
\node at (2,3) {\hwire};
\node at (3,3) {\hwire};

\draw[thick] (-0.5,-0.5) -- (-0.5,3.5) -- (3.5,3.5) -- (3.5,-0.5) -- (-0.5,-0.5);

\node at (0,-1) {1};
\node at (1,-1) {2};
\node at (2,-1) {3};
\node at (3,-1) {4};

\node at (4,3) {1};
\node at (4,2) {4};
\node at (4,1) {3};
\node at (4,0) {2};

\end{tikzpicture} \; 
\begin{tikzpicture}[scale=0.4]
\node at (0,0) {\vwire};
\node at (1,0) {\are};
\node at (2,0) {\cross};
\node at (3,0) {\cross};

\node at (0,1) {\vwire};
\node at (1,1) {\nowire};
\node at (2,1) {\are};
\node at (3,1) {\cross};

\node at (0,2) {\are};
\node at (1,2) {\jay};
\node at (2,2) {\nowire};
\node at (3,2) {\are};

\node at (0,3) {\nowire};
\node at (1,3) {\are};
\node at (2,3) {\hwire};
\node at (3,3) {\hwire};

\draw[thick] (-0.5,-0.5) -- (-0.5,3.5) -- (3.5,3.5) -- (3.5,-0.5) -- (-0.5,-0.5);

\node at (0,-1) {1};
\node at (1,-1) {2};
\node at (2,-1) {3};
\node at (3,-1) {4};

\node at (4,3) {1};
\node at (4,2) {4};
\node at (4,1) {3};
\node at (4,0) {2};

\end{tikzpicture} \; 
\begin{tikzpicture}[scale=0.4]
\node at (0,0) {\vwire};
\node at (1,0) {\are};
\node at (2,0) {\cross};
\node at (3,0) {\cross};

\node at (0,1) {\are};
\node at (1,1) {\jay};
\node at (2,1) {\are};
\node at (3,1) {\cross};

\node at (0,2) {\nowire};
\node at (1,2) {\vwire};
\node at (2,2) {\nowire};
\node at (3,2) {\are};

\node at (0,3) {\nowire};
\node at (1,3) {\are};
\node at (2,3) {\hwire};
\node at (3,3) {\hwire};

\draw[thick] (-0.5,-0.5) -- (-0.5,3.5) -- (3.5,3.5) -- (3.5,-0.5) -- (-0.5,-0.5);

\node at (0,-1) {1};
\node at (1,-1) {2};
\node at (2,-1) {3};
\node at (3,-1) {4};

\node at (4,3) {1};
\node at (4,2) {4};
\node at (4,1) {3};
\node at (4,0) {2};

\end{tikzpicture} \;
\begin{tikzpicture}[scale=0.4]
\node at (0,0) {\vwire};
\node at (1,0) {\are};
\node at (2,0) {\cross};
\node at (3,0) {\cross};

\node at (0,1) {\vwire};
\node at (1,1) {\nowire};
\node at (2,1) {\are};
\node at (3,1) {\cross};

\node at (0,2) {\are};
\node at (1,2) {\hwire};
\node at (2,2) {\jay};
\node at (3,2) {\are};

\node at (0,3) {\nowire};
\node at (1,3) {\nowire};
\node at (2,3) {\are};
\node at (3,3) {\hwire};

\draw[thick] (-0.5,-0.5) -- (-0.5,3.5) -- (3.5,3.5) -- (3.5,-0.5) -- (-0.5,-0.5);

\node at (0,-1) {1};
\node at (1,-1) {2};
\node at (2,-1) {3};
\node at (3,-1) {4};

\node at (4,3) {1};
\node at (4,2) {4};
\node at (4,1) {3};
\node at (4,0) {2};

\end{tikzpicture} \; 
\begin{tikzpicture}[scale=0.4]
\node at (0,0) {\vwire};
\node at (1,0) {\are};
\node at (2,0) {\cross};
\node at (3,0) {\cross};

\node at (0,1) {\are};
\node at (1,1) {\jay};
\node at (2,1) {\are};
\node at (3,1) {\cross};

\node at (0,2) {\nowire};
\node at (1,2) {\are};
\node at (2,2) {\jay};
\node at (3,2) {\are};

\node at (0,3) {\nowire};
\node at (1,3) {\nowire};
\node at (2,3) {\are};
\node at (3,3) {\hwire};

\draw[thick] (-0.5,-0.5) -- (-0.5,3.5) -- (3.5,3.5) -- (3.5,-0.5) -- (-0.5,-0.5);

\node at (0,-1) {1};
\node at (1,-1) {2};
\node at (2,-1) {3};
\node at (3,-1) {4};

\node at (4,3) {1};
\node at (4,2) {4};
\node at (4,1) {3};
\node at (4,0) {2};

\end{tikzpicture}
\]

For $v$ vexillary, Weigandt  recently introduced a bijection~\cite{weigandt2021bumpless} 
\[
\gamma: \text{BPD}(v) \rightarrow \text{SSYT}_{\phi(v)}(\lambda(v)).
\]
The correspondence can be described as follows.
Given $B \in \text{BPD}(v)$, fill each $\inlinenowire$-tile with the number of pipes that are above it.
Next, ignore all pipes and slide each number-filled tile northwest along its main diagonal to create a partition in the upper-left corner.
This step is well-defined as $v$ is vexillary.
Lastly, increase the value of each filling by its row index, i.e., add $i$ to each filling in row $i$ for all rows.
For example, with $B \in \text{BPD}(12587634)$ as below, we compute $\gamma(B)$:
\[
\centering
\begin{tabular}{ccc}
\begin{tikzpicture}[scale=0.4]
\node at (-2,4) {$B = $};
\node at (0,0) {\vwire};
\node at (1,0) {\vwire};
\node at (2,0) {\vwire};
\node at (3,0) {\are};
\node at (4,0) {\cross};
\node at (5,0) {\cross};
\node at (6,0) {\cross};
\node at (7,0) {\cross};

\node at (0,1) {\vwire};
\node at (1,1) {\vwire};
\node at (2,1) {\are};
\node at (3,1) {\hwire};
\node at (4,1) {\cross};
\node at (5,1) {\cross};
\node at (6,1) {\cross};
\node at (7,1) {\cross};

\node at (0,2) {\vwire};
\node at (1,2) {\vwire};
\node at (2,2) {\nowire};
\node at (2,2) {2};
\node at (3,2) {\nowire};
\node at (3,2) {2};
\node at (4,2) {\vwire};
\node at (5,2) {\are};
\node at (6,2) {\cross};
\node at (7,2) {\cross};

\node at (0,3) {\vwire};
\node at (1,3) {\vwire};
\node at (2,3) {\nowire};
\node at (2,3) {2};
\node at (3,3) {\nowire};
\node at (3,3) {2};
\node at (4,3) {\are};
\node at (5,3) {\jay};
\node at (6,3) {\are};
\node at (7,3) {\cross};

\node at (0,4) {\vwire};
\node at (1,4) {\are};
\node at (2,4) {\hwire};
\node at (3,4) {\hwire};
\node at (4,4) {\jay};
\node at (5,4) {\are};
\node at (6,4) {\jay};
\node at (7,4) {\are};

\node at (0,5) {\vwire};
\node at (1,5) {\nowire};
\node at (1,5) {1};
\node at (2,5) {\nowire};
\node at (2,5) {1};
\node at (3,5) {\nowire};
\node at (3,5) {1};
\node at (4,5) {\vwire};
\node at (5,5) {\nowire};
\node at (5,5) {2};
\node at (6,5) {\are};
\node at (7,5) {\hwire};

\node at (0,6) {\are};
\node at (1,6) {\hwire};
\node at (2,6) {\jay};
\node at (3,6) {\nowire};
\node at (3,6) {1};
\node at (4,6) {\are};
\node at (5,6) {\hwire};
\node at (6,6) {\hwire};
\node at (7,6) {\hwire};

\node at (0,7) {\nowire};
\node at (0,7) {0};
\node at (1,7) {\nowire};
\node at (1,7) {0};
\node at (2,7) {\are};
\node at (3,7) {\hwire};
\node at (4,7) {\hwire};
\node at (5,7) {\hwire};
\node at (6,7) {\hwire};
\node at (7,7) {\hwire};

\draw (-0.5,-0.5) -- (-0.5,7.5) -- (7.5,7.5) -- (7.5,-0.5) -- (-0.5,-0.5);

\node at (0,-1) {1};
\node at (1,-1) {2};
\node at (2,-1) {3};
\node at (3,-1) {4};
\node at (4,-1) {5};
\node at (5,-1) {6};
\node at (6,-1) {7};
\node at (7,-1) {8};

\node at (8,7) {1};
\node at (8,6) {2};
\node at (8,5) {5};
\node at (8,4) {8};
\node at (8,3) {7};
\node at (8,2) {6};
\node at (8,1) {3};
\node at (8,0) {4};

\end{tikzpicture} &
\begin{tikzpicture}[scale=0.4]
\node at (0,0) {\nowire};
\node at (1,0) {\nowire};
\node at (2,0) {\nowire};
\node at (3,0) {\nowire};
\node at (4,0) {\nowire};
\node at (5,0) {\nowire};
\node at (6,0) {\nowire};
\node at (7,0) {\nowire};

\node at (0,1) {\nowire};
\node at (1,1) {\nowire};
\node at (2,1) {\nowire};
\node at (3,1) {\nowire};
\node at (4,1) {\nowire};
\node at (5,1) {\nowire};
\node at (6,1) {\nowire};
\node at (7,1) {\nowire};

\node at (0,2) {\nowire};
\node at (1,2) {\nowire};
\node at (2,2) {\nowire};
\node at (2,2) {2};
\node at (3,2) {\nowire};
\node at (3,2) {2};
\node at (4,2) {\nowire};
\node at (5,2) {\nowire};
\node at (6,2) {\nowire};
\node at (7,2) {\nowire};

\node at (0,3) {\nowire};
\node at (1,3) {\nowire};
\node at (2,3) {\nowire};
\node at (2,3) {2};
\node at (3,3) {\nowire};
\node at (3,3) {2};
\node at (4,3) {\nowire};
\node at (5,3) {\nowire};
\node at (6,3) {\nowire};
\node at (7,3) {\nowire};

\node at (0,4) {\nowire};
\node at (1,4) {\nowire};
\node at (2,4) {\nowire};
\node at (3,4) {\nowire};
\node at (4,4) {\nowire};
\node at (5,4) {\nowire};
\node at (6,4) {\nowire};
\node at (7,4) {\nowire};

\node at (0,5) {\nowire};
\node at (1,5) {\nowire};
\node at (1,5) {1};
\node at (2,5) {\nowire};
\node at (2,5) {1};
\node at (3,5) {\nowire};
\node at (3,5) {1};
\node at (4,5) {\nowire};
\node at (5,5) {\nowire};
\node at (5,5) {2};
\node at (6,5) {\nowire};
\node at (7,5) {\nowire};

\node at (0,6) {\nowire};
\node at (1,6) {\nowire};
\node at (2,6) {\nowire};
\node at (3,6) {\nowire};
\node at (3,6) {1};
\node at (4,6) {\nowire};
\node at (5,6) {\nowire};
\node at (6,6) {\nowire};
\node at (7,6) {\nowire};

\node at (0,7) {\nowire};
\node at (0,7) {0};
\node at (1,7) {\nowire};
\node at (1,7) {0};
\node at (2,7) {\nowire};
\node at (3,7) {\nowire};
\node at (4,7) {\nowire};
\node at (5,7) {\nowire};
\node at (6,7) {\nowire};
\node at (7,7) {\nowire};

\node at (0,-1) {};
\node at (1,-1) {};
\node at (2,-1) {};
\node at (3,-1) {};
\node at (4,-1) {};
\node at (5,-1) {};
\node at (6,-1) {};
\node at (7,-1) {};

\node at (8,7) {};
\node at (8,6) {};
\node at (8,5) {};
\node at (8,4) {};
\node at (8,3) {};
\node at (8,2) {};
\node at (8,1) {};
\node at (8,0) {};

\draw[dashed,->] (4.6,5.4) -- (3,7);
\draw[dashed,->] (2.6,6.4) -- (2,7);
\draw[dashed,->] (2.6,5.4) -- (2,6);
\draw[dashed,->] (1.6,5.4) -- (1,6);
\draw[dashed,->] (0.6,5.4) -- (0,6);
\draw[dashed,->] (1.6,3.4) -- (0,5);
\draw[dashed,->] (1.6,2.4) -- (0,4);
\draw[dashed,->] (2.6,3.4) -- (1.6,4.4);

\draw (-0.5,-0.5) -- (-0.5,7.5) -- (7.5,7.5) -- (7.5,-0.5) -- (-0.5,-0.5);

\end{tikzpicture} &
\begin{tikzpicture}[scale=0.4]
\node at (0,4) {\nowire};
\node at (0,4) {6};
\node at (1,4) {\nowire};
\node at (1,4) {6};

\node at (0,5) {\nowire};
\node at (0,5) {5};
\node at (1,5) {\nowire};
\node at (1,5) {5};

\node at (0,6) {\nowire};
\node at (0,6) {3};
\node at (1,6) {\nowire};
\node at (1,6) {3};
\node at (2,6) {\nowire};
\node at (2,6) {3};

\node at (0,7) {\nowire};
\node at (0,7) {1};
\node at (1,7) {\nowire};
\node at (1,7) {1};
\node at (2,7) {\nowire};
\node at (2,7) {2};
\node at (3,7) {\nowire};
\node at (3,7) {3};

\node at (0,-1) {};
\node at (1,-1) {};
\node at (2,-1) {};
\node at (3,-1) {};
\node at (4,-1) {};
\node at (5,-1) {};
\node at (6,-1) {};
\node at (7,-1) {};

\node at (8,7) {};
\node at (8,6) {};
\node at (8,5) {};
\node at (8,4) {};
\node at (8,3) {};
\node at (8,2) {};
\node at (8,1) {};
\node at (8,0) {};
\node at (4,5) {$=\gamma(B)$};
\end{tikzpicture}
\end{tabular}
\]

\section{Maps}

\subsection{Edelman-Greene insertion}  In~\cite{edelman1987balanced}, Edelman and Greene introduced  a variant of the Robinson-Schensted-Knuth (RSK) algorithm that associates tableaux to reduced words.
We work with a slightly modified version mapping each reduced compatible sequence to a pair of tableaux $(\widetilde{P},\widetilde{Q})$.
Our version generalizes column RSK.
The only modification is that when $x$ is inserted into a column already containing $x$, that column is unchanged and $x{+}1$ is inserted into the next column.
See Section~\ref{ss:EG-A} for a complete description.
We use column insertion to ensure the resulting $\widetilde{Q}$-tableau is semi-standard.

Edelman-Greene and RSK are the same when applied to reduced words for Grassmannian permutations since Definition~\ref{d:eg-insert} 2(a) is never required.
Therefore, properties of RSK extend to Edelman-Greene insertion for such reduced compatible sequences.
\begin{proposition}[{see \cite[Prop. 3.9.3]{sagan2001symmetric}}] \label{p:Sagan}
If $\binom{r_1, r_2, \ldots, r_\ell}{a_1, a_2, \ldots, a_\ell}$ is a reduced compatible sequence for a Grassmannian permutation then
\[
\widetilde{Q} \left( \binom{r_2, \ldots, r_\ell}{a_2, \ldots, a_{\ell}} \right) = \text{jdt} \left( \widetilde{Q} \left( \binom{r_1, r_2, \ldots, r_\ell}{a_1, a_2, \ldots, a_{\ell}} \right) \right).
\]
\end{proposition}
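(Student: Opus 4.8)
The plan is to reduce the statement to the classical relationship between RSK and jeu de taquin recorded in~\cite[Prop.~3.9.3]{sagan2001symmetric}. The starting observation, already made in the paragraph preceding the proposition, is that for a reduced compatible sequence of a Grassmannian permutation Edelman--Greene insertion coincides with column RSK, since the extra Edelman--Greene rule (Definition~\ref{d:eg-insert} 2(a)) is never triggered. Thus $\widetilde{Q}$ is literally an RSK recording tableau, and the claim becomes the purely RSK-theoretic assertion that deleting the first biletter of a biword alters the recording tableau by a single forward jeu de taquin slide out of the corner $(1,1)$.

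First I would pin down where the smallest recording value sits. Because $r_1 \leq r_2 \leq \cdots \leq r_\ell$, the entry $r_1$ is minimal, and the first insertion step adds the box $(1,1)$ to the empty tableau; hence $(1,1)$ carries $r_1$ in $\widetilde{Q}$. Deleting $\binom{r_1}{a_1}$ therefore corresponds exactly to erasing the filling at $(1,1)$, which is precisely the cell excised in $\text{jdt}(\widetilde{Q}) = \text{rect}(\widetilde{Q}/(1))$. It then remains to match the effect of the remaining insertions $a_2, \ldots, a_\ell$ with the forward slide that fills in this hole, and this is the content of the cited proposition (or its column-insertion analogue, which follows by the same classical argument): the recording tableau of a word with its first letter removed is obtained from the original recording tableau by the forward jeu de taquin slide from $(1,1)$.

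The one point requiring care is that the $r_i$ need not be distinct, so $\widetilde{Q}$ is genuinely semistandard rather than standard. I would handle this by standardizing $\widetilde{Q}$ according to insertion order, breaking ties among equal $r$-values by the step at which the corresponding box was created; this is consistent with the compatible-sequence conditions~(ii)--(iv) and sends $(1,1)$ to the minimal label. Applying the standard single-letter deletion rule to the standardized tableau and then destandardizing yields the result, since forward jeu de taquin slides depend only on the relative order of entries and hence commute with standardization. I expect this bookkeeping---checking that standardization, deletion of the first biletter, and the slide all interact compatibly, and that the recording values of the truncated sequence occupy the same relative positions---to be the main (if routine) obstacle, the underlying RSK--jdt fact being available off the shelf.
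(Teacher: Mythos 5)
Your proposal is correct and is essentially the paper's own argument: the paper proves this proposition precisely by observing (in the paragraph preceding the statement) that Edelman--Greene insertion coincides with column RSK for Grassmannian reduced compatible sequences, so that the classical deletion/jeu-de-taquin fact cited from Sagan applies directly. Your standardization bookkeeping for repeated $r_i$'s is just the routine content implicit in that citation, so you have matched, and slightly expanded, the paper's proof rather than taken a different route.
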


\subsection{The Gao-Huang Bijection}

Recently, Gao and Huang  gave a bijection $\varphi: \text{BPD}(w) \rightarrow \text{PD}(w)$ preserving several important properties~\cite{gao2021canonical}.
Their map is iterative, with each step performed by an operator $\nabla$ acting on bumpless pipe dreams.
We define $\nabla$ as Definition~\ref{d:nabla}.
For $w \in S_\infty$ and $B \in \text{BPD}(w)$, we have $\nabla(B) \in \text{BPD}(v)$ for some $v$ satisfying $\ell(v) = \ell(w) - 1$.
Using $\nabla$, one can construct a pair $\text{pop}(B) = \binom{r}{a}$.

\begin{definition}
Given $B$ in $\text{BPD}(w)$ with $\ell(w) = \ell$ then
\[
\varphi(B) = \binom{r_1, \ldots, r_\ell}{a_1, \ldots, a_\ell}
\]
where $\binom{r_i}{a_i} = \text{pop}(\nabla^{i-1} B)$ for $1 \leq i \leq \ell$.
\end{definition}

See Figure~\ref{fig:gao-huang} in Appendix~\ref{ss:gao-huang-A} for an example of this map.
By definition, we have the following relationship between $\varphi(B)$ and $\varphi(\nabla B)$.

\begin{lemma} \label{l:PhiNab}
If $\varphi(B) = \binom{r_1, r_2, \ldots, r_\ell}{a_1, a_2, \ldots, a_\ell}$ then $\varphi(\nabla B) = \binom{r_2, \ldots, r_\ell}{a_2, \ldots, a_\ell}$.	
\end{lemma}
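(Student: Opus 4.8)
The plan is to unwind the definition of $\varphi$ directly; the entire content of the lemma is bookkeeping for the iterated operator $\nabla$, so I would not expect to need any structural facts about bumpless pipe dreams beyond the length-lowering property of $\nabla$ recorded just before the definition of $\varphi$.

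First I would set $B' = \nabla B$ and record that, since each application of $\nabla$ lowers length by one, $B' \in \text{BPD}(v)$ with $\ell(v) = \ell - 1$. Consequently $\varphi(B')$ is a reduced compatible sequence of length $\ell - 1$, and by the definition of $\varphi$ we may write $\varphi(\nabla B) = \binom{r'_1, \ldots, r'_{\ell-1}}{a'_1, \ldots, a'_{\ell-1}}$ with $\binom{r'_i}{a'_i} = \text{pop}(\nabla^{i-1} B')$ for each $1 \leq i \leq \ell - 1$.

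The key step is the identity $\nabla^{i-1} B' = \nabla^{i-1}(\nabla B) = \nabla^{i} B$, which merely expresses that iterating $\nabla$ composes. Substituting this gives $\binom{r'_i}{a'_i} = \text{pop}(\nabla^{i} B)$. On the other hand, the defining formula for $\varphi(B)$ reads $\binom{r_{i+1}}{a_{i+1}} = \text{pop}(\nabla^{(i+1)-1} B) = \text{pop}(\nabla^{i} B)$. Comparing the two lines yields $\binom{r'_i}{a'_i} = \binom{r_{i+1}}{a_{i+1}}$ for every $1 \leq i \leq \ell - 1$, which is exactly the asserted equality of biwords after dropping the first column.

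There is no genuine obstacle here: the statement is an immediate consequence of $\varphi$ being defined as the ordered list of pops along the $\nabla$-orbit of $B$, and the only subtlety is the index shift produced by discarding the initial term $\text{pop}(B)$. The single point deserving a sentence of care is that $\varphi(\nabla B)$ really has length $\ell - 1$ rather than $\ell$; this is guaranteed by the length-lowering property $\ell(v) = \ell(w) - 1$ of $\nabla$, and it is what ensures the two sequences have matching lengths so that the termwise comparison exhausts both.
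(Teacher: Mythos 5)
Your proof is correct and matches the paper exactly: the paper offers no written proof at all, simply asserting the lemma holds ``by definition'' of $\varphi$, and your argument is precisely the index-shift bookkeeping $\text{pop}(\nabla^{i-1}(\nabla B)) = \text{pop}(\nabla^{i} B)$ that this remark leaves implicit. Your added sentence checking that $\varphi(\nabla B)$ has length $\ell-1$ via the length-lowering property of $\nabla$ is a reasonable touch of care, but introduces nothing beyond what the paper takes for granted.
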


In~\cite{huang2021schubert}, an operator equivalent to $\nabla$ was defined for Grassmannian bumpless pipe dreams, where the following property was observed:
\begin{proposition}[{\cite[Prop. 5.1]{huang2021schubert}}]\label{p:HuangCor}
Let $w$ be a Grassmannian permutation and $B \in \text{BPD}(w)$.
Then
\[
\text{jdt}(\gamma(B)) = \gamma(\nabla B).
\]
\end{proposition}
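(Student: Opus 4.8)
The plan is to argue directly at the level of Grassmannian bumpless pipe dreams, matching a single application of $\nabla$ with a single jeu de taquin slide, rather than routing through the identity $\gamma = \widetilde{Q}\circ\varphi$. That identity is precisely the Grassmannian case of the main theorem, and since Proposition~\ref{p:HuangCor} is an input to that theorem, invoking it here would be circular. I would therefore keep the reduced compatible sequence picture only as motivation: by Lemma~\ref{l:PhiNab}, $\varphi(\nabla B)$ is obtained from $\varphi(B)$ by popping its first biletter, and by Proposition~\ref{p:Sagan} popping the first biletter is realized by $\text{jdt}$ on the recording tableau. The task is to reproduce this phenomenon intrinsically for $\gamma$ and $\nabla$.

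First I would pin down $\gamma$ on a Grassmannian $B$ cell by cell. Recording the number of pipes above each blank tile produces a filled diagram $E$ supported on the blanks of $B$; the northwest slide packs $E$ into $Y_{\lambda(w)}$, and adding the row index $i$ in row $i$ yields $\gamma(B)$. The key structural point is that the slide moves each filled tile along its own diagonal, so the diagonal index is preserved and every cell of $\gamma(B)$ remembers the blank tile it came from. For a Grassmannian permutation the blanks number exactly $\ell(w)=|\lambda(w)|$, so this is a bijection between cells of $\gamma(B)$ and blanks of $B$. In particular it lets me locate the image of the $(1,1)$ cell of $\gamma(B)$: it is carried by the blank tile with the globally smallest value of (pipes above)$+$(row index), which is the minimal entry removed when forming $\gamma(B)/(1)$.

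Next I would make $\nabla$ explicit in the Grassmannian case, using the operator of~\cite{huang2021schubert} equivalent to it. A single application performs one canonical droop: it locates the distinguished pipe corner associated to the first biletter $\binom{r_1}{a_1}$ of $\varphi(B)$ and reroutes that pipe, shifting the fillings of the neighboring blanks along a connected northwest-to-southeast path and reducing the number of relevant blanks by one (so $\lambda$ loses a box, consistent with $\ell(\nabla B)=\ell(w)-1$). I would then verify two things: (a) under the dictionary of the previous step the blank whose contribution is deleted is the one of smallest (pipes above)$+$(row), i.e. the image of the $(1,1)$ cell, matching the vacancy of $\gamma(B)/(1)$; and (b) the rerouting path, pushed through the northwest slide and the $+i$ shift, is exactly the jeu de taquin sliding path. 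The crucial role of the row-index shift is to convert the local comparison forced on the pipe dream (to keep it reduced) into the jeu de taquin comparison of the tableau entries to the right and below the empty tile.

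The main obstacle is step (b): aligning the two paths square by square, including the fact that jeu de taquin breaks ties by sliding down while $\nabla$ resolves the same ambiguity by its canonical choice, and checking that every intermediate configuration remains a reduced, still-Grassmannian bumpless pipe dream so that $\gamma(\nabla B)$ is defined and the two shapes agree at the end. I would handle this by induction on the length of the sliding path, peeling off one square at a time and using the Grassmannianity of $w$ to guarantee the slide never stalls and the diagonal bookkeeping stays consistent. Assembling these local matches gives $\gamma(\nabla B)=\text{rect}(\gamma(B)/(1))=\text{jdt}(\gamma(B))$.
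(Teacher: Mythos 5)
You should first know that the paper does not prove this proposition at all: it is imported verbatim from \cite[Prop.~5.1]{huang2021schubert}, with Figure~\ref{fig:HuangCor} serving only as an illustration. So there is no internal argument to compare against, and within this paper a citation is the intended ``proof.'' That said, your opening observation is correct and worth crediting: deriving the statement from $\gamma = \widetilde{Q}\circ\varphi$ would be circular here, since Proposition~\ref{p:HuangCor} is an input to Theorem~\ref{t:Grassmannian} (both in Lemma~\ref{l:grassmannian-induction} and in the final step of the induction). A direct matching of one application of $\nabla$ with one jeu de taquin slide is indeed the right kind of argument, and it is essentially what the cited source carries out.

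However, your proposal is a roadmap rather than a proof, and its one concrete anchor is wrong. Steps (a) and (b) --- which are the entire content of the proposition --- are asserted, not executed. For (a): by Definition~\ref{d:nabla}, $\nabla$ initializes the mark at the rightmost blank tile in the topmost row containing blanks, \emph{not} at the blank feeding cell $(1,1)$ of $\gamma(B)$. In the paper's own example (Figure~\ref{fig:HuangCor}, left BPD) the top row has blanks in positions $(1,1)$ and $(1,2)$: the diagonal-preserving dictionary sends the blank $(1,1)$ to the cell $(1,1)$, while $\nabla$ starts at $(1,2)$; moreover your selection rule ``smallest (pipes above) plus row index'' ties between these two blanks, so it does not even single out a tile. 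For (b): each jeu de taquin step moves the empty tile right or down, changing its diagonal by exactly $\pm 1$, whereas the mark in $\nabla$ jumps rightward across a contiguous block of blanks in Step~(1) and from $(i,j)$ to $(i',j+1)$ with $i'>i$ arbitrary in Step~(2b), changing the diagonal by $1-(i'-i)$, which can be any nonpositive amount. Hence no literal square-by-square identification of the two paths exists; one needs a coarser dictionary (runs of equal tableau entries versus contiguous blank blocks, with the rerouting of crossing pipes in Step~(2a) accounted for), and supplying that calibration --- together with the check that intermediate configurations stay reduced --- is precisely the nontrivial work of \cite[Prop.~5.1]{huang2021schubert} that your sketch defers.
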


See Figure~\ref{fig:HuangCor} after Appendix~\ref{ss:gao-huang-A} for an example of this proposition.

\subsection{Little bumps} 
\label{ss:little}
Monk's rule is a recurrence for Schubert polynomials.
Little bumps are bijections on certain sets of  reduced words~\cite{little2003combinatorial} that extend\footnote{This extension was first described in~\cite{knutsonschubert}, where Knutson attributes it to Buch.} to bijections on reduced compatible sequences~\cite{billey2019bijective}, giving a bijective proof of Monk's formula.

For $w \in S_\infty$ and $(i,j)$ such that $\ell(w t_{ij}) = \ell(w) - 1$, the \emph{Little bump} $L_{ij}$ sends $\binom{\textbf{r}}{\textbf{a}} \in \text{PD}(w)$ to a reduced compatible sequence $L_{ij}\binom{\textbf{r}}{\textbf{a}} = \binom{\textbf{r}}{\textbf{b}}$  for some other permutation $v$.
A precise definition of $L_{ij}$ appears as Definition~\ref{d:little}.
We require two key properties of the Little bijection.
They are the adaptation of~\cite[Property~(2)]{little2003combinatorial} and~\cite[Prop.~1]{hamaker2014relating}, respectively, to our conventions.
Proofs appear in Section~\ref{ss:little-A}.

\begin{proposition}
\label{p:Little-Grassmannian}
	Let $w \in S_\infty$.
	For every $\binom{\textbf{r}}{\textbf{a}} \in \text{PD}(w)$, there exists a Grassmannian permutation $v$ and  a sequence $L_{i_1j_1}, \dots, L_{i_kj_k}$ of Little bumps so that
	\[
	L_{i_kj_k} \circ \dots \circ L_{i_1j_1} \binom{\textbf{r}}{\textbf{a}}\in \text{PD}(v).
	\]
\end{proposition}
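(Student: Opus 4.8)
The plan is to reproduce Little's proof of \cite[Property~(2)]{little2003combinatorial} in the reduced-compatible-sequence conventions of this note. The starting point is a reformulation of what a bump does: by Definition~\ref{d:little} the operator $L_{ij}$ fixes the lower word $\mathbf r$ and replaces the reduced word $\mathbf a$ by another reduced word $\mathbf b$ that is still compatible with $\mathbf r$, with $\mathbf b \in \text{Red}(v)$ for a permutation $v$ satisfying $\ell(v)=\ell(w)$. Thus Little bumps act as length-preserving moves on reduced compatible sequences sharing a fixed $\mathbf r$, and the task is to steer $w$ to some Grassmannian $v$. Since $\ell$ is constant along a bump sequence, the argument cannot be an induction on length; it must instead be powered by a monovariant that detects failure of the single-descent condition.

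First I would fix a concrete description of the goal. Recall that $w$ is Grassmannian exactly when it has at most one descent, equivalently when its code $c(w)$ is weakly increasing up to its last nonzero entry and zero afterward. When $w$ is not Grassmannian it has at least two descents, and this pins down a canonical target for a bump: taking, say, the rightmost descent $d$ (the largest $d$ with $w(d)>w(d+1)$) determines a cover inversion $(i,j)=(d,d+1)$ with $\ell(w t_{ij})=\ell(w)-1$, to which we apply $L_{ij}$, and Little's construction makes such a choice canonical. Following Little, the engine of the proof is the claim that this canonically chosen bump strictly decreases a nonnegative integer statistic---Little's statistic, which one may take to be a lexicographic functional of $\text{Des}(w)$ together with the descent positions (equivalently, a count of the crossings in the wiring diagram of $\binom{\mathbf r}{\mathbf b}$ that obstruct the Grassmannian shape). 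Because this statistic is bounded below and is minimized precisely on Grassmannian permutations, iterating the canonical bump must terminate, and it can only terminate at a reduced compatible sequence for a Grassmannian $v$; this is the assertion.

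The hard part will be the key lemma underlying the second paragraph: that the canonical bump is always legal (the chosen cover inversion really satisfies $\ell(w t_{ij})=\ell(w)-1$, and $\mathbf b$ remains a reduced word compatible with the unchanged $\mathbf r$) and that it strictly decreases the chosen statistic. This is the technical core of Little's argument, and it is exactly where the translation of conventions bites, since Little works with a row-style ``push'' while here the bumps fix $\mathbf r$; the direction of the cascade and the bookkeeping of which crossing moves at each step must be re-derived in the present setup, as carried out in Section~\ref{ss:little-A}. As a guide and consistency check I would lean on the compatibility of Little bumps with Edelman--Greene insertion \cite{hamaker2014relating}: a bump preserves the recording tableau $\widetilde{Q}$, so the shape $\lambda(w)$ is an invariant of the whole bump sequence; this constrains the intermediate permutations and guarantees that the Grassmannian permutation eventually reached has shape $\lambda(w)$, consistent with the reduction to the Grassmannian case needed for the main theorem.
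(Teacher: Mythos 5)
There is a genuine gap: your proposal announces that it will ``reproduce Little's proof'' of \cite[Property~2]{little2003combinatorial}, but it never does. The entire content of the statement is the termination argument, and your sketch defers exactly that: the monovariant is never defined (``a lexicographic functional of $\text{Des}(w)$ \dots\ equivalently, a count of the crossings \dots\ that obstruct the Grassmannian shape'' is not a statistic one can check decreases), and no argument is given that your canonical bump strictly decreases it. Your canonical choice itself is suspect: you bump at the cover inversion $(d,d+1)$ at the rightmost descent $d$, whereas the choice driving the Lascoux--Sch\"utzenberger tree (and Little's termination) is $t_{rs}$ with $r$ the last descent and $s$ \emph{maximal} such that $w(s)<w(r)$; you give no reason that iterating $(d,d+1)$-bumps terminates. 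Finally, you appeal to Section~\ref{ss:little-A} as carrying out the ``cascade bookkeeping,'' but that appendix only defines the bump (Definition~\ref{d:little}) and proves the two propositions by citation; the re-derivation you promise is not there, in the paper or in your proposal.

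The paper's actual proof is a two-line reduction rather than a re-derivation: it cites \cite[Property~2]{little2003combinatorial} for the existence of a terminating sequence of \emph{downward} Little bumps, and then converts conventions via the complementation $a_i \mapsto b_i = m-a_i$ for $w \in S_m$, under which downward bumps correspond to the upward bumps $L_{ij}$ used here and Grassmannian reduced words map to Grassmannian reduced words; preservation of the compatible-sequence condition $r_i \le a_i$ comes for free because upward bumps increment values (combined with \cite[Property~1]{little2003combinatorial}). If you do not want to cite Property~2, you must genuinely supply the termination proof, which is the hard part you skipped. One more correction: your ``consistency check'' is false as stated. Proposition~\ref{p:hamaker-young} says a bump preserves $\widetilde{Q}\left(\binom{\textbf{r}}{\textbf{a}}\right)$, whose shape is \emph{not} $\lambda(w)$ for non-vexillary $w$ and varies over $\text{PD}(w)$: for $w = 2143$ the elements $\binom{1,2}{1,3}$ and $\binom{1,1}{3,1}$ insert to shapes $(1,1)$ and $(2)$ respectively, so the Grassmannian endpoint depends on the pipe dream and need not have shape $\lambda(w)=(1,1)$.
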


\begin{proposition}
\label{p:hamaker-young}
	Let $w \in S_\infty$ and $(i,j)$ so that $\ell(wt_{ij}) = \ell(w) - 1$.
	For any $\binom{\textbf{r}}{\textbf{a}} \in \text{PD}(w)$, we have
	\[
	\widetilde{Q}\left(\binom{\textbf{r}}{\textbf{a}}\right) = \widetilde{Q}\left(L_{ij}\binom{\textbf{r}}{\textbf{a}}\right)
	\]
\end{proposition}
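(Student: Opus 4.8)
The plan is to follow Hamaker and Young~\cite{hamaker2014relating}, verifying that their argument survives the two changes of convention at play here: we use \emph{column} rather than row insertion, and we track reduced \emph{compatible sequences} rather than bare reduced words. The first move is to reformulate what must be shown. Because $L_{ij}$ alters only the lower word, sending $\binom{\mathbf{r}}{\mathbf{a}}$ to $\binom{\mathbf{r}}{\mathbf{b}}$ with $\mathbf{r}$ fixed, the recording tableau is determined by the sequence of intermediate shapes
\[
\mu^{(0)} \subseteq \mu^{(1)} \subseteq \cdots \subseteq \mu^{(\ell)}, \qquad \mu^{(k)} = \operatorname{shape}\widetilde{P}\!\left(\tbinom{r_1,\dots,r_k}{a_1,\dots,a_k}\right),
\]
together with $\mathbf{r}$: the cell $\mu^{(k)}/\mu^{(k-1)}$ is precisely where the entry $r_k$ is placed in $\widetilde{Q}$. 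Hence the identity $\widetilde{Q}\bigl(\binom{\mathbf{r}}{\mathbf{a}}\bigr) = \widetilde{Q}\bigl(L_{ij}\binom{\mathbf{r}}{\mathbf{a}}\bigr)$ is equivalent to the assertion that $\mathbf{a}$ and $\mathbf{b}$ produce the identical sequence of intermediate shapes. This shape-sequence invariance is the only thing I would try to prove.

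The substance is then to show this invariance, and here I would transcribe the Hamaker--Young analysis of how the Little cascade interacts with insertion. Recall from Definition~\ref{d:little} that $L_{ij}$ is effected by a cascade of single-letter decrements: one distinguished crossing is slid across an adjacent pipe, which may force a neighboring crossing to move, and so on, until a reduced compatible sequence for $v$ is reached. The key lemma to establish is that, viewed through column Edelman--Greene insertion, this cascade translates the family of bumping paths of the affected letters while fixing their terminal cells; since those terminal cells are exactly the added cells $\mu^{(k)}/\mu^{(k-1)}$, every intermediate shape is then preserved. The passage from the row-insertion statement of~\cite{hamaker2014relating} to our setting is the reflection of all tableaux across the main diagonal, under which the relevant bumping-path lemma holds verbatim. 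The one genuinely new case is the modified rule in Definition~\ref{d:eg-insert}~2(a) --- inserting $x$ into a column that already contains $x$ --- which is absent both from ordinary RSK and from the Grassmannian situation; I would check directly that a Little decrement acts symmetrically on the two columns involved, so that the bump terminates at the same cell in each of the two words.

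The main obstacle is the cascade together with the fact that its intermediate stages are \emph{non-reduced}, so one cannot argue push-by-push: Edelman--Greene insertion, and hence $\widetilde{Q}$, is only defined on reduced inputs. I therefore expect to track the entire cascade at once, following a single ``defect'' through the wiring diagram and showing that each decrement shifts one bumping path onto the next while leaving the union of their endpoints invariant. Controlling this requires a confluence statement --- that the bumping paths touched by the cascade are linearly ordered, so their terminal cells never collide or reorder --- which is the heart of the Hamaker--Young argument and the step I expect to demand the most care when re-deriving it under column insertion.
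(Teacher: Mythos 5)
Your proposal has the same skeleton as the paper's proof, but it diverges in where the work is located, and the comparison is instructive. Your opening reduction is precisely the paper's entire ``biword upgrade'': since $L_{ij}$ fixes $\mathbf{r}$, equality of the semistandard recording tableaux follows once the standard recording tableaux --- equivalently, the shape sequences --- of the two bottom words agree, and the row-versus-column discrepancy is absorbed by transposition (valid because Edelman--Greene insertion tableaux increase along both rows and columns, so column insertion is literally the transposed algorithm). At that point the paper simply stops: its proof is three sentences, citing \cite[Prop.~1]{hamaker2014relating} as applying verbatim after transposing. You instead propose to re-derive that proposition by re-running the bumping-path and confluence analysis under column insertion. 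That plan is not wrong, and it would make the argument self-contained, but most of the care you budget is unnecessary, and one of your worries rests on a misreading of what is being cited. Your ``genuinely new case,'' Definition~\ref{d:eg-insert}~2(a), is not new relative to Hamaker--Young: their proposition concerns Edelman--Greene insertion on arbitrary reduced words --- not RSK, and not merely the Grassmannian case --- so the special rule is already internal to their published proof, as is the non-reducedness of the intermediate stages of the cascade; neither difficulty is reopened by the change of conventions, which only requires the transposition and the fixed-$\mathbf{r}$ observation you already have. Two smaller slips: the paper's $L_{ij}$ \emph{increments} letters (upward bumps) rather than decrementing them, which is harmless since $\widetilde{Q}$-invariance of a bijection and of its inverse are the same statement; and ``equivalent'' in your reformulation should be ``implied by'' --- equal shape sequences give equal $\widetilde{Q}$, which is the only direction used, while the converse is neither needed nor obvious when entries of $\mathbf{r}$ repeat. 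In short, what the paper's route buys is brevity at the cost of a black box; what yours buys is a self-contained account at the cost of re-verifying delicate cascade combinatorics that the citation already covers.
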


\subsection{Huang bumps}
In \cite{huang2020bijective} two maps were defined on bumpless pipe dreams to give a combinatorial realization of Monk's formula.
We need one of these maps, which we refer to as a Huang bump in analogy with Little bumps.
A precise definition of Huang bumps appears as Definition~\ref{d:huang}.
Huang bumps and Little bumps are directly related by the Gao-Huang bijection.

\begin{theorem}[{\cite[Thm.~4.5]{gao2021canonical}}]
\label{t:canonical}
Let $w \in S_\infty$ with $\ell(w t_{ij}) = \ell(w) - 1$.
For $B \in \text{BPD}(w)$, we have
\[
(L_{ij} \circ \varphi)(B) = (\varphi \circ H_{ij})(B).
\]
\end{theorem}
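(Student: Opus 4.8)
The statement is the commutativity of the square whose top edge is the Huang bump $H_{ij}\colon \text{BPD}(w)\to\text{BPD}(v)$, whose bottom edge is the Little bump $L_{ij}\colon \text{PD}(w)\to\text{PD}(v)$, and whose two vertical edges are the Gao--Huang bijection $\varphi$. Since $L_{ij}$ fixes the compatible word $\mathbf{r}$ and alters only the reduced word $\mathbf{a}\mapsto\mathbf{b}$, and since consequently $\ell(v)=\ell(w)$, my plan is to prove the identity by strong induction on $\ell(w)$, over all permutations and all admissible transpositions, using the iterative description $\varphi(B)=\binom{r_1,\ldots,r_\ell}{a_1,\ldots,a_\ell}$ with $\binom{r_i}{a_i}=\text{pop}(\nabla^{i-1}B)$ together with Lemma~\ref{l:PhiNab}, which identifies $\varphi(\nabla B)$ with the tail $\binom{r_2,\ldots,r_\ell}{a_2,\ldots,a_\ell}$.

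The entire inductive content reduces to a single one-step compatibility lemma. On the bumpless side I would show that $\nabla$ intertwines the Huang bump with a re-indexed Huang bump, $\nabla\circ H_{ij}=H_{i'j'}\circ\nabla$, where $t_{i'j'}$ is determined by how the bumping path meets the crossing removed by the $\nabla$-droop; on the pipe-dream side I would establish the matching recursive description of the Little push, namely that $L_{ij}$ applied to $\binom{r_1,\mathbf{r}'}{a_1,\mathbf{a}'}$ has tail $L_{i'j'}\binom{\mathbf{r}'}{\mathbf{a}'}$ and a first bi-letter computed locally from $(i,j)$ and $\binom{r_1}{a_1}$. Granting both, the induction closes: applying the inductive hypothesis to $\nabla B$ (which has strictly smaller length) gives $\varphi(H_{i'j'}\nabla B)=L_{i'j'}\varphi(\nabla B)$, so by Lemma~\ref{l:PhiNab} the tails of $\varphi(H_{ij}B)$ and of $L_{ij}\varphi(B)$ coincide, and the only remaining verification is that the leading bi-letter $\text{pop}(H_{ij}B)$ equals the first bi-letter produced by one step of the Little push. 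The base case $\ell(w)=1$ is a single crossing and is immediate.

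The main obstacle is the intertwining $\nabla\circ H_{ij}=H_{i'j'}\circ\nabla$, because neither bump is a local move: the Huang bump propagates a defect across many tiles while $\nabla$ droops a crossing at the boundary, so the two processes can genuinely interfere. The delicate case is when the pipe carrying the bump is the one touched by the droop; there one must trace the bumping path through the droop and check that the index shift $(i,j)\mapsto(i',j')$ exactly compensates. I expect this to be settled by a tile-by-tile analysis of the six bumpless tiles under the two operations, mirroring the wiring-diagram analysis behind the Little push, with the ``canonical'' character of the Gao--Huang construction --- that $\nabla$ is the unique droop compatible with the Monk recurrence --- ensuring that the shifted bump $H_{i'j'}$ is forced rather than ad hoc.
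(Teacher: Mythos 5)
This theorem is not proved in the paper at all: it is imported verbatim from Gao--Huang \cite[Thm.~4.5]{gao2021canonical}, and the paper's only contribution here is an example illustrating it. So your proposal must be judged on its own, and as it stands it is an outline of a proof rather than a proof. The inductive skeleton is reasonable and is in fact the same general strategy as the proof in the cited reference: if one had (a) the intertwining $\nabla\circ H_{ij}=H_{i'j'}\circ\nabla$ with an explicitly determined $t_{i'j'}$, (b) the matching statement that deleting the head bi-letter commutes with the Little bump up to the same reindexing, and (c) the equality $\text{pop}(H_{ij}B)=$ head bi-letter of $L_{ij}\varphi(B)$, then Lemma~\ref{l:PhiNab} and induction on $\ell(w)$ close the argument exactly as you say. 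But (a)--(c) jointly \emph{are} the theorem; you conjecture them and defer their verification to an unexecuted ``tile-by-tile analysis.'' That is where all the content lives, and invoking the ``canonical'' character of $\varphi$ cannot substitute for it --- canonicity is a consequence of this theorem, not an ingredient available to prove it.

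Two of your auxiliary claims are moreover false or at least unproven as stated, so the gap is not merely one of omitted routine checking. First, the head bi-letter of $L_{ij}\varphi(B)$ is \emph{not} ``computed locally from $(i,j)$ and $\binom{r_1}{a_1}$'': the bumping chain in Definition~\ref{d:little} is global, may visit the position of the first bi-letter (possibly more than once, since indices can recur in the chain), and hence can change $a_1$ by an amount determined only by the whole chain; symmetrically, $\text{pop}(H_{ij}B)$ can differ from $\text{pop}(B)$ in both coordinates when the Huang bump path of Definition~\ref{d:huang} passes through the region affected by the droops of Definition~\ref{d:nabla}. Second, the clean intertwining $\nabla\circ H_{ij}=H_{i'j'}\circ\nabla$ cannot hold in the degenerate case where the crossing of pipes $i$ and $j$ is the very crossing destroyed by $\nabla$ (note $\nabla B\in\text{BPD}(s_aw)$ with $\ell(s_aw)=\ell(w)-1$, so one inversion disappears): then pipes $i$ and $j$ no longer cross in $\nabla B$, no Huang bump $H_{i'j'}$ corresponding to $t_{ij}$ is defined on it, and your recursion has no well-formed inductive call. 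Any correct proof must handle this boundary interaction separately (this is precisely the delicate case you name but do not resolve), and until the case analysis for (a)--(c) including this degeneration is carried out, the argument is a program for a proof, not a proof.
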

\noindent Below is an example for $B \in \text{BPD}(1432)$:
\[
\begin{tikzcd}
\begin{tikzpicture}[scale=0.4]
\node at (0,0) {\vwire};
\node at (1,0) {\are};
\node at (2,0) {\cross};
\node at (3,0) {\cross};

\node at (0,1) {\vwire};
\node at (1,1) {\nowire};
\node at (2,1) {\are};
\node at (3,1) {\cross};

\node at (0,2) {\are};
\node at (1,2) {\jay};
\node at (2,2) {\nowire};
\node at (3,2) {\are};

\node at (0,3) {\nowire};
\node at (1,3) {\are};
\node at (2,3) {\hwire};
\node at (3,3) {\hwire};

\draw[thick] (-0.5,0) -- (-0.5,4) -- (3.5,4) -- (3.5,0) -- (-0.5,0);

\node at (4,3) {1};
\node at (4,2) {4};
\node at (4,1) {3};
\node at (4,0) {2};
\node at (-2,2) {$B = $};
\end{tikzpicture} \arrow[start anchor=26.8, end anchor= real west]{r}{\text{embed}} &
\begin{tikzpicture}[scale=0.4]
\node at (0,0) {\vwire};
\node at (1,0) {\vwire};
\node at (2,0) {\vwire};
\node at (3,0) {\vwire};
\node at (4,0) {\are};

\node at (0,1) {\vwire};
\node at (1,1) {\are};
\node at (2,1) {\cross};
\node at (3,1) {\cross};
\node at (4,1) {\hwire};

\node at (0,2) {\vwire};
\node at (1,2) {\nowire};
\node at (2,2) {\are};
\node at (3,2) {\cross};
\node at (4,2) {\hwire};

\node at (0,3) {\are};
\node at (1,3) {\jay};
\node at (2,3) {\nowire};
\node at (3,3) {\are};
\node at (4,3) {\hwire};

\node at (0,4) {\nowire};
\node at (1,4) {\are};
\node at (2,4) {\hwire};
\node at (3,4) {\hwire};
\node at (4,4) {\hwire};

\draw[thick] (-0.5,0) -- (-0.5,5) -- (4.5,5) -- (4.5,0) -- (-0.5,0);

\node at (5,4) {1};
\node at (5,3) {4};
\node at (5,2) {3};
\node at (5,1) {2};
\node at (5,0) {5};
\end{tikzpicture} \arrow[start anchor = real east, end anchor = real west]{r}{H_{3 \: 4}} \arrow{d}{\varphi} &
\begin{tikzpicture}[scale=0.4]
\node at (0,0) {\vwire};
\node at (1,0) {\vwire};
\node at (2,0) {\vwire};
\node at (3,0) {\are};
\node at (4,0) {\cross};

\node at (0,1) {\vwire};
\node at (1,1) {\are};
\node at (2,1) {\cross};
\node at (3,1) {\hwire};
\node at (4,1) {\cross};

\node at (0,2) {\vwire};
\node at (1,2) {\nowire};
\node at (2,2) {\are};
\node at (3,2) {\jay};
\node at (4,2) {\are};

\node at (0,3) {\are};
\node at (1,3) {\jay};
\node at (2,3) {\nowire};
\node at (3,3) {\are};
\node at (4,3) {\hwire};

\node at (0,4) {\nowire};
\node at (1,4) {\are};
\node at (2,4) {\hwire};
\node at (3,4) {\hwire};
\node at (4,4) {\hwire};

\draw[thick] (-0.5,0) -- (-0.5,5) -- (4.5,5) -- (4.5,0) -- (-0.5,0);

\node at (5,4) {1};
\node at (5,3) {3};
\node at (5,2) {5};
\node at (5,1) {2};
\node at (5,0) {4};
\end{tikzpicture} \arrow{d}{\varphi} \\
 & 
\binom{1,2,3}{3,2,3} \arrow{r}{L_{3 \: 4}} &
\binom{1,2,3}{4,2,3}
\end{tikzcd} 
\]

We require one technical property of Huang bumps.
This follows from Lemma~\ref{l:huang-vexillary}, which states that Huang bumps for vexillary permutations do not alter the location of $\inlinenowire$-tiles.

\begin{corollary}
\label{c:huang-vexillary}
If $B$ and $B' = H_{ij}(B)$ are bumpless pipe dreams for vexillary permutations, then $\gamma(B) = \gamma(B')$. 
\end{corollary}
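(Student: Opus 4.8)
The plan is to reduce the statement to an equality of tableau entries and then read it off from the local action of the bump. Recall that $\gamma$ is computed in three stages: (i) label each $\inlinenowire$-tile by the number of pipes lying above it in its column; (ii) forget the pipes and rectify these labels northwest along diagonals into a partition (well defined since the permutation is vexillary); and (iii) add the row index to each entry. Stages (ii)--(iii) depend only on the \emph{labelled empty diagram}---the set of empty cells together with their labels---and never refer to the pipes again; moreover they are literally the same deterministic procedure for $B$ and for $B'$, since both permutations are vexillary. Hence $\gamma(B)=\gamma(B')$ reduces to showing that $B$ and $B'=H_{ij}(B)$ have the same labelled empty diagram. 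By Lemma~\ref{l:huang-vexillary} they already have the same empty cells, so the only thing left is to match the labels, that is, to show that each empty cell has the same number of pipes above it in $B$ and in $B'$.

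Next I would rewrite the label as an edge count, which is easier to control under rerouting. Every tile routes pipes weakly upward and weakly rightward, so each pipe meets a fixed column in a single contiguous run of rows. If $(r,c)$ is empty then no pipe occupies it, so any pipe meeting column $c$ strictly above row $r$ has its entire column-$c$ run above row $r$; the run must therefore begin by entering column $c$ from the \emph{west} at some row $r'<r$, since a south entry at the bottom of the run would force an occupied cell directly below it, inside the run (and when $r'=r-1$ this is blocked by the empty cell $(r,c)$ itself). As distinct west entries lie on distinct pipes, the label of $(r,c)$ equals the number of cells $(r',c)$ with $r'<r$ whose west edge carries a pipe---equivalently, the number of pipes crossing the vertical grid segment running upward from the northwest corner of $(r,c)$.

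It now suffices to show that $H_{ij}$ does not change the west edge of any cell lying strictly north of an empty cell in its column; granting this, the edge counts of the previous paragraph, and hence all labels, agree for $B$ and $B'$, the labelled empty diagrams coincide, and $\gamma(B)=\gamma(B')$. This is the step where I would use the explicit description of the Huang bump from Definition~\ref{d:huang} that underlies Lemma~\ref{l:huang-vexillary}: for a vexillary permutation I expect every tile altered by $H_{ij}$ to lie weakly below all empty cells of its own column, so that no cell $(r',c)$ with an empty cell $(r,c)$, $r>r'$, is ever modified. I expect this confinement to be the main obstacle. The invariance of the empty cells is handed to us by Lemma~\ref{l:huang-vexillary}, but the invariance of the relevant west edges is not formal---indeed the label is genuinely sensitive to the pipe routing above the cell, not just to the empty cells---so it has to be read off from the case analysis defining $H_{ij}$, by checking that each modified tile sits below every empty cell of its column and therefore cannot affect any pipes-above count.
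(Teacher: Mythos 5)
Your reduction to equality of labelled empty diagrams is sound, and your reformulation of the labels as west-edge crossing counts is correct: pipes travel weakly up and right, so each pipe meets a fixed column in a single contiguous run, and when an empty tile sits below the run its bottom cell must be a west entry. But the proposal stops exactly where the paper's argument begins. The confinement step you flag as ``the main obstacle'' and leave as an expectation to be verified by a case analysis of Definition~\ref{d:huang} is already contained in Lemma~\ref{l:huang-vexillary} as actually stated: the lemma does not merely assert that the empty cells are preserved, it asserts that $B$ and $B' = H_{ij}(B)$ agree \emph{tile-for-tile} on all of $Y_{\rho(v)}$. (Its proof is that the bump initializes at a $\inlinecross$-tile, which lies outside $Y_{\rho(v)}$ by Lemma~\ref{p:BPDcrossings}, and the subsequent Monk moves never enter $Y_{\rho(v)}$.) Combined with the fact, from Weigandt's analysis underlying $\gamma$, that every $\inlinenowire$-tile of a vexillary bumpless pipe dream lies in $Y_{\rho(v)}$, this settles everything at once: if $(r,c)$ is empty then each cell $(r',c)$ with $r' < r$ also lies in $Y_{\rho(v)}$, since Young diagrams are top-justified in each column, so the entire column segment determining the label of $(r,c)$ is untouched by the bump, and no west-edge bookkeeping is needed. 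In particular your conjectured confinement---that every altered tile lies below all empty cells of its column---is true, but it follows formally from the lemma (altered tiles lie outside $Y_{\rho(v)}$, empty tiles inside) rather than requiring a fresh analysis of the bump.

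Two further remarks. First, you correctly observed that invariance of the empty-cell \emph{positions} alone would not suffice, since the labels depend on the routing above each blank; the paper's one-sentence gloss preceding the corollary (``do not alter the location of $\inlinenowire$-tiles'') is indeed weaker than what is needed, and it is the stronger statement of Lemma~\ref{l:huang-vexillary} that closes this, a point your write-up makes visible where the paper leaves it implicit. Second, both your argument and the paper's quietly use that the blanks of a vexillary bumpless pipe dream are confined to $Y_{\rho(v)}$ (they slide along the diagonals of $\lambda(v)$, which is also what makes $\gamma$ well defined); this deserves an explicit citation to Weigandt if you complete the write-up. As it stands, your proposal is a correct outline that is more laborious than necessary and has an acknowledged gap at its decisive step---a gap the existing lemma fills with no extra machinery.
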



\section{Main Results}

We now state Lenart's bijection.

\begin{theorem}[{\cite[Rem.~4.12~(2)]{L04}}]
\label{t:lenart} 
For $v$ a vexillary permutation, the map $\widetilde{Q}: \text{PD}(v) \rightarrow \text{SSYT}_{\phi(v)}(\lambda(v))$ is a bijection.
\end{theorem}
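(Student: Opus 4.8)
The plan is to realize $\widetilde{Q}$ as a composition of two known bijections. Gao--Huang furnish a bijection $\varphi\colon \text{BPD}(v) \to \text{PD}(v)$ and Weigandt a bijection $\gamma\colon \text{BPD}(v) \to \text{SSYT}_{\phi(v)}(\lambda(v))$, so once we establish the identity
\[
\widetilde{Q}(\varphi(B)) = \gamma(B) \qquad \text{for all } B \in \text{BPD}(v),
\]
we will have $\widetilde{Q} = \gamma \circ \varphi^{-1}$, a composite of bijections, and bijectivity is immediate. This identity is precisely the content of Theorem~\ref{t:main}, so the real work is proving it for every vexillary $v$.

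First I would handle the Grassmannian case by induction on $\ell(v)$. Writing $\varphi(B) = \binom{r_1,\dots,r_\ell}{a_1,\dots,a_\ell}$, Lemma~\ref{l:PhiNab} gives $\varphi(\nabla B) = \binom{r_2,\dots,r_\ell}{a_2,\dots,a_\ell}$, whence Proposition~\ref{p:Sagan} yields $\widetilde{Q}(\varphi(\nabla B)) = \text{jdt}(\widetilde{Q}(\varphi(B)))$, while Proposition~\ref{p:HuangCor} gives $\gamma(\nabla B) = \text{jdt}(\gamma(B))$. The inductive hypothesis $\widetilde{Q}(\varphi(\nabla B)) = \gamma(\nabla B)$ then forces $\text{jdt}(\widetilde{Q}(\varphi(B))) = \text{jdt}(\gamma(B))$. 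Since $\text{jdt}$ is invertible once the common shape $\lambda(v)$ and the entry deleted from the top-left corner are known, it remains only to match the $(1,1)$-entries. For $\widetilde{Q}(\varphi(B))$ this entry is $r_1$, and I would verify directly, by tracing the northwest slide in Weigandt's map against the first $\text{pop}$ step, that the corresponding box of $\gamma(B)$ also records $r_1$. This corner-matching together with the trivial base case closes the Grassmannian induction.

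To pass from Grassmannian to vexillary I would flatten using bumps. Given $B \in \text{BPD}(v)$, Proposition~\ref{p:Little-Grassmannian} produces Little bumps $L_{i_kj_k}\circ\cdots\circ L_{i_1j_1}$ carrying $\varphi(B)$ into $\text{PD}(v')$ for some Grassmannian $v'$. The canonical relation $L_{ij}\circ\varphi = \varphi\circ H_{ij}$ of Theorem~\ref{t:canonical} transports this to a matching sequence of Huang bumps taking $B$ to some $B' \in \text{BPD}(v')$ with $\varphi(B') = L_{i_kj_k}\circ\cdots\circ L_{i_1j_1}\varphi(B)$. Now Proposition~\ref{p:hamaker-young} shows $\widetilde{Q}$ is constant along the Little-bump chain, so $\widetilde{Q}(\varphi(B)) = \widetilde{Q}(\varphi(B'))$, and Corollary~\ref{c:huang-vexillary} shows $\gamma$ is constant along the Huang-bump chain, so $\gamma(B) = \gamma(B')$. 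Since $v'$ is Grassmannian, the previous paragraph gives $\widetilde{Q}(\varphi(B')) = \gamma(B')$, and chaining these equalities yields $\widetilde{Q}(\varphi(B)) = \gamma(B)$.

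The main obstacle is the final step: Corollary~\ref{c:huang-vexillary} applies only when a bumpless pipe dream and its Huang bump are both vexillary, so I must ensure the entire intermediate chain $B = B_0, B_1, \dots, B_k = B'$ lies in $\text{BPD}$ of vexillary permutations, not merely that the endpoint $v'$ is Grassmannian. Securing this preservation is the delicate point, and it is exactly where the ``canonical'' nature of the Gao--Huang bijection and precise control over how the Little bumps move boxes through the diagram are needed; apart from the Grassmannian corner-matching, this is the only place requiring genuine care.
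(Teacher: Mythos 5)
Your proposal follows the paper's proof essentially verbatim: Theorem~\ref{t:lenart} is deduced from Theorem~\ref{t:main} via $\widetilde{Q} = \gamma \circ \varphi^{-1}$, the Grassmannian case is the same induction combining Lemma~\ref{l:PhiNab}, Proposition~\ref{p:Sagan} and Proposition~\ref{p:HuangCor}, and the vexillary extension uses exactly Proposition~\ref{p:Little-Grassmannian}, Theorem~\ref{t:canonical}, Corollary~\ref{c:huang-vexillary}, Theorem~\ref{t:Grassmannian} and Proposition~\ref{p:hamaker-young}. The two delicate points you flag --- matching the removed $(1,1)$-entries when inverting $\text{jdt}$, and vexillarity of the intermediate permutations in the bump chain so that Corollary~\ref{c:huang-vexillary} can be applied repeatedly --- are genuine but are glossed in the paper's own write-up as well (the latter holds because the Lascoux--Sch\"utzenberger transition chain of a vexillary permutation passes only through vexillary permutations), so your added caution refines rather than changes the route.
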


In this section, we will prove Theorem~\ref{t:lenart} by showing the Gao-Huang bijection $\varphi: \text{BPD}(v) \rightarrow \text{PD}(v)$ preserves the underlying semistandard Young tableau associated to $B$ in $\text{BPD}(v)$ and $\varphi(B)$ in $\text{PD}(v)$. 

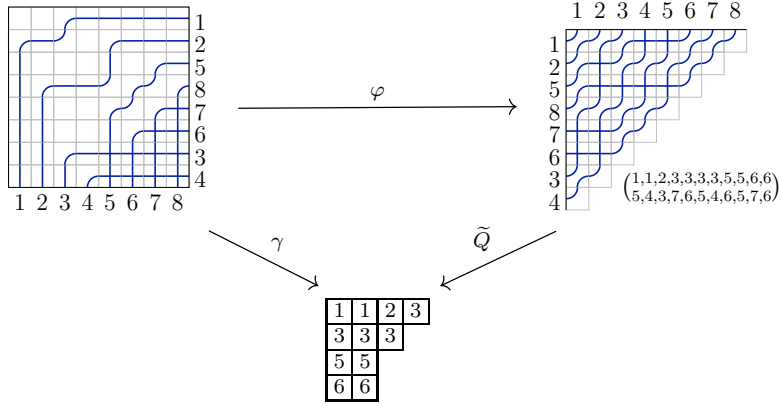
\begin{figure}[h!] 
\centering
\begin{tikzcd}[ampersand replacement=\&]
\scalebox{0.75}{
\begin{tikzpicture}[scale=0.4]
\node at (0,0) {\vwire};
\node at (1,0) {\vwire};
\node at (2,0) {\vwire};
\node at (3,0) {\are};
\node at (4,0) {\cross};
\node at (5,0) {\cross};
\node at (6,0) {\cross};
\node at (7,0) {\cross};

\node at (0,1) {\vwire};
\node at (1,1) {\vwire};
\node at (2,1) {\are};
\node at (3,1) {\hwire};
\node at (4,1) {\cross};
\node at (5,1) {\cross};
\node at (6,1) {\cross};
\node at (7,1) {\cross};

\node at (0,2) {\vwire};
\node at (1,2) {\vwire};
\node at (2,2) {\nowire};
\node at (3,2) {\nowire};
\node at (4,2) {\vwire};
\node at (5,2) {\are};
\node at (6,2) {\cross};
\node at (7,2) {\cross};

\node at (0,3) {\vwire};
\node at (1,3) {\vwire};
\node at (2,3) {\nowire};
\node at (3,3) {\nowire};
\node at (4,3) {\are};
\node at (5,3) {\jay};
\node at (6,3) {\are};
\node at (7,3) {\cross};

\node at (0,4) {\vwire};
\node at (1,4) {\are};
\node at (2,4) {\hwire};
\node at (3,4) {\hwire};
\node at (4,4) {\jay};
\node at (5,4) {\are};
\node at (6,4) {\jay};
\node at (7,4) {\are};

\node at (0,5) {\vwire};
\node at (1,5) {\nowire};
\node at (2,5) {\nowire};
\node at (3,5) {\nowire};
\node at (4,5) {\vwire};
\node at (5,5) {\nowire};
\node at (6,5) {\are};
\node at (7,5) {\hwire};

\node at (0,6) {\are};
\node at (1,6) {\hwire};
\node at (2,6) {\jay};
\node at (3,6) {\nowire};
\node at (4,6) {\are};
\node at (5,6) {\hwire};
\node at (6,6) {\hwire};
\node at (7,6) {\hwire};

\node at (0,7) {\nowire};
\node at (1,7) {\nowire};
\node at (2,7) {\are};
\node at (3,7) {\hwire};
\node at (4,7) {\hwire};
\node at (5,7) {\hwire};
\node at (6,7) {\hwire};
\node at (7,7) {\hwire};

\draw (-0.5,0) -- (-0.5,8) -- (7.5,8) -- (7.5,0) -- (-0.5,0);

\node at (0,-1) {1};
\node at (1,-1) {2};
\node at (2,-1) {3};
\node at (3,-1) {4};
\node at (4,-1) {5};
\node at (5,-1) {6};
\node at (6,-1) {7};
\node at (7,-1) {8};

\node at (8,7) {1};
\node at (8,6) {2};
\node at (8,5) {5};
\node at (8,4) {8};
\node at (8,3) {7};
\node at (8,2) {6};
\node at (8,1) {3};
\node at (8,0) {4};
\end{tikzpicture} } \arrow[start anchor=real east, end anchor=real west]{rr}{\varphi} \arrow{dr}{\gamma} \& \&
\scalebox{0.75}{
\begin{tikzpicture}[scale=0.4]
\node at (0,-1) {\jay};

\node at (0,0) {\bump};
\node at (1,0) {\jay};

\node at (0,1) {\cross};
\node at (1,1) {\cross};
\node at (2,1) {\jay};

\node at (0,2) {\cross};
\node at (1,2) {\cross};
\node at (2,2) {\bump};
\node at (3,2) {\jay};

\node at (0,3) {\bump};
\node at (1,3) {\bump};
\node at (2,3) {\bump};
\node at (3,3) {\bump};
\node at (4,3) {\jay};

\node at (0,4) {\bump};
\node at (1,4) {\cross};
\node at (2,4) {\cross};
\node at (3,4) {\cross};
\node at (4,4) {\cross};
\node at (5,4) {\jay};

\node at (0,5) {\bump};
\node at (1,5) {\cross};
\node at (2,5) {\bump};
\node at (3,5) {\bump};
\node at (4,5) {\bump};
\node at (5,5) {\bump};
\node at (6,5) {\jay};

\node at (0,6) {\bump};
\node at (1,6) {\bump};
\node at (2,6) {\bump};
\node at (3,6) {\cross};
\node at (4,6) {\cross};
\node at (5,6) {\bump};
\node at (6,6) {\bump};
\node at (7,6) {\jay};


\node at (0,7.5) {1};
\node at (1,7.5) {2};
\node at (2,7.5) {3};
\node at (3,7.5) {4};
\node at (4,7.5) {5};
\node at (5,7.5) {6};
\node at (6,7.5) {7};
\node at (7,7.5) {8};

\node at (-1,6) {1};
\node at (-1,5) {2};
\node at (-1,4) {5};
\node at (-1,3) {8};
\node at (-1,2) {7};
\node at (-1,1) {6};
\node at (-1,0) {3};
\node at (-1,-1) {4};

\draw (-0.5,-1) -- (-0.5,7) -- (7.5,7);

\node at (5.5,-0.25) {$\binom{1,1,2,3,3,3,3,5,5,6,6}{5,4,3,7,6,5,4,6,5,7,6}$};

\end{tikzpicture} } \arrow[swap]{dl}{\widetilde{Q}} \\ \&
\begin{ytableau}
1 & 1 & 2 & 3  \\
3 & 3 & 3 \\
5 & 5 \\
6 & 6
\end{ytableau} \&
\end{tikzcd}
\caption{An example of the maps.
\label{fig:bijections}}
\end{figure}

We first prove the statement for Grassmannian permutations.
Our argument uses the following observation.


\begin{lemma}
\label{l:grassmannian-induction}
For $w$ Grassmannian and $B \in \text{BPD}(w)$ one has
\[
\text{jdt}(\widetilde{Q}(\varphi(B))) = \widetilde{Q}(\varphi(\nabla B)).
\]
\end{lemma}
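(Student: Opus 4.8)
The plan is to chain the two structural facts already available. Lemma~\ref{l:PhiNab} tells us exactly how $\nabla$ acts on the biword produced by $\varphi$: it deletes the first column. Proposition~\ref{p:Sagan} tells us exactly how deleting the first column of a Grassmannian reduced compatible sequence affects the recording tableau: it performs one jeu de taquin slide. Since both statements concern the same truncation of $\binom{r_1,\dots,r_\ell}{a_1,\dots,a_\ell}$, I expect them to compose directly, with the lemma following in essentially a single line.

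Concretely, I would write $\varphi(B) = \binom{r_1,\dots,r_\ell}{a_1,\dots,a_\ell}$ and first check that this is a reduced compatible sequence for the Grassmannian permutation $w$, which holds because $\varphi$ maps $\text{BPD}(w)$ into $\text{PD}(w)$ and $w$ is Grassmannian by hypothesis; this is what licenses the use of Proposition~\ref{p:Sagan}. Next I would invoke Lemma~\ref{l:PhiNab} to identify $\varphi(\nabla B) = \binom{r_2,\dots,r_\ell}{a_2,\dots,a_\ell}$. Finally, applying Proposition~\ref{p:Sagan} to $\varphi(B)$ yields
\[
\widetilde{Q}(\varphi(\nabla B)) = \widetilde{Q}\!\left(\binom{r_2,\dots,r_\ell}{a_2,\dots,a_\ell}\right) = \text{jdt}\!\left(\widetilde{Q}\!\left(\binom{r_1,\dots,r_\ell}{a_1,\dots,a_\ell}\right)\right) = \text{jdt}(\widetilde{Q}(\varphi(B))),
\]
which is exactly the desired identity.

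I do not anticipate a genuine obstacle: the entire content is borrowed from the two cited results, and the argument reduces to recognizing that they describe compatible operations on the same object. The one point deserving explicit mention is the hypothesis check above --- that $\varphi(B)$ is a Grassmannian reduced compatible sequence --- since it is precisely this Grassmannian assumption that forces Edelman--Greene and RSK to coincide and thereby makes Proposition~\ref{p:Sagan} applicable. Note that we never need $\nabla B$ itself to be Grassmannian, since $\widetilde{Q}(\varphi(\nabla B))$ is computed only as the $\widetilde{Q}$-tableau of the truncated biword, and all the jeu de taquin information comes from applying Proposition~\ref{p:Sagan} to the original sequence $\varphi(B)$. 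This Grassmannian identity is then the base case on which the later vexillary induction will rest.
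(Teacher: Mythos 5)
Your proof is correct and takes essentially the same route as the paper, whose one-line proof likewise combines Proposition~\ref{p:Sagan} with Lemma~\ref{l:PhiNab}; your explicit hypothesis check that $\varphi(B)$ is a reduced compatible sequence for the Grassmannian $w$, together with the observation that the permutation of $\nabla B$ never needs to be checked, is exactly the right bookkeeping. The only discrepancy is that the paper's proof also cites Proposition~\ref{p:HuangCor}, which your argument shows is dispensable at this step --- that proposition concerns $\gamma$, which does not appear in the lemma, and is genuinely needed only later in the induction of Theorem~\ref{t:Grassmannian}.
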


\begin{proof}

	The result follows by combining Proposition~\ref{p:Sagan} and Lemma~\ref{l:PhiNab} with Proposition~\ref{p:HuangCor}.
\end{proof}

\begin{theorem}
\label{t:Grassmannian}
For $w$ Grassmannian with descent in position $k$, the following diagram commutes:
\[
\begin{tikzcd}
\text{BPD}(w) \arrow{rr}{\varphi} \arrow{dr}{\gamma} & & \text{PD}(w) \arrow[swap]{dl}{\widetilde{Q}} \\ & \text{SSYT}_k(w) &
\end{tikzcd}
\]
\end{theorem}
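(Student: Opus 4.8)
The plan is to induct on $\ell(w)$, ranging over all Grassmannian permutations simultaneously, and to prove that $\widetilde{Q}(\varphi(B)) = \gamma(B)$ for every $B \in \text{BPD}(w)$. When $\ell(w) = 0$ the permutation is the identity, both $\text{BPD}(w)$ and $\text{PD}(w)$ are singletons, and $\widetilde{Q}(\varphi(B)) = \gamma(B)$ is the empty tableau, so the diagram commutes. For the inductive step I fix a Grassmannian $w$ with $\ell(w) = \ell > 0$ and $B \in \text{BPD}(w)$, and I assume $\widetilde{Q} \circ \varphi = \gamma$ on $\text{BPD}(v)$ for every Grassmannian $v$ with $\ell(v) < \ell$. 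Here I use that $\nabla B$ is again a bumpless pipe dream for a Grassmannian permutation $v$ with $\ell(v) = \ell - 1$ (this is built into Huang's construction underlying Proposition~\ref{p:HuangCor}), so the inductive hypothesis applies to $\nabla B$.

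The core of the argument is that both $\widetilde{Q} \circ \varphi$ and $\gamma$ intertwine the operator $\nabla$ on bumpless pipe dreams with the map $\text{jdt}$ on tableaux. Chaining Lemma~\ref{l:grassmannian-induction}, the inductive hypothesis, and Proposition~\ref{p:HuangCor} gives
\[
\text{jdt}\bigl(\widetilde{Q}(\varphi(B))\bigr) = \widetilde{Q}(\varphi(\nabla B)) = \gamma(\nabla B) = \text{jdt}\bigl(\gamma(B)\bigr).
\]
Both $\widetilde{Q}(\varphi(B))$ and $\gamma(B)$ have shape $\lambda(w)$: the former because $\varphi(B)$ is a reduced compatible sequence for $w$, whose Edelman--Greene recording tableau has shape $\lambda(w)$, and the latter by the definition of $\gamma$. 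Since jeu de taquin is invertible once the shape and the entry removed from the top-left corner are known, it remains only to show that the two tableaux have the same $(1,1)$-entry.

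The hard part will be matching this corner entry, and I would handle it with a weight-preservation argument rather than a direct analysis of the first $\text{pop}$ step. The content of the recording tableau $\widetilde{Q}(\varphi(B))$ is exactly the multiset of row indices of the pipe dream $\varphi(B)$; since $\varphi$ is weight-preserving~\cite{gao2021canonical} and $\gamma$ is weight-preserving~\cite{weigandt2021bumpless}, this multiset coincides with the content of $\gamma(B)$. On the other hand, $\text{jdt}(T) = \text{rect}(T/(1))$ deletes precisely the box $(1,1)$ and then rectifies, so the content of $\text{jdt}(T)$ is that of $T$ with the single entry $T_{11}$ removed. Applying this to both $\widetilde{Q}(\varphi(B))$ and $\gamma(B)$, whose total contents agree and whose $\text{jdt}$'s agree (hence have equal content), forces the removed entries to coincide, i.e.\ $\widetilde{Q}(\varphi(B))_{11} = \gamma(B)_{11}$. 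Invertibility of jdt then yields $\widetilde{Q}(\varphi(B)) = \gamma(B)$, completing the induction. The subtleties to watch are confirming that $\nabla$ keeps us among Grassmannian permutations (so the inductive hypothesis is available) and that the weight-preservation of $\varphi$ and $\gamma$ is genuinely independent of the statement being proved, which it is, since it concerns only the multiset of entries and not their arrangement.
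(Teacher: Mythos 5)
Your proposal is correct and takes essentially the same route as the paper: the same induction on $\ell(w)$ through the operator $\nabla$, chaining Lemma~\ref{l:grassmannian-induction}, the inductive hypothesis, and Proposition~\ref{p:HuangCor}. The only difference is that you explicitly justify the step the paper compresses into $\text{jdt}^{-1}$ --- pinning down the common shape $\lambda(w)$ and the common $(1,1)$-entry via weight preservation of $\varphi$ and $\gamma$ --- a point the paper's proof leaves implicit.
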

\begin{proof}
Let $\ell(w) = p$ and $B \in \text{BPD}(w)$.
We will show $\gamma(B) = \widetilde{Q}(\varphi(B))$ using induction on $p$.
If $p = 0$ then all objects are empty and the result holds.
Now suppose $p \geq 1$ and that the statement holds for all Grassmannian $w'$ with $\ell(w') < p$.
Write $\text{pop}(B) = \binom{r}{a}$.
By definition $\nabla B \in \text{BPD}(s_a w)$.
Moreover, $w' = s_a w$  is Grassmannian with $\ell(w') = p - 1$. 
We now compute
\begin{align*}
\widetilde{Q}(\varphi(B)) &= \text{jdt}^{-1}(\widetilde{Q}(\varphi(\nabla B)))  \tag{Lemma~\ref{l:grassmannian-induction}} \\
	&= \text{jdt}^{-1}(\gamma({\nabla B})) \tag{Induction} \\
	&= \gamma(B) \tag{Proposition~\ref{p:HuangCor}}.
\end{align*}
This completes our proof.
\end{proof}

Next, we obtain our main result, extending Theorem~\ref{t:Grassmannian} to all vexillary permutations using Little bumps and Huang bumps.

\begin{theorem}
\label{t:main}
For $v$ vexillary, the following diagram commutes:
\[
\begin{tikzcd}
\text{BPD}(v) \arrow{rr}{\varphi} \arrow{dr}{\gamma} & & \text{PD}(v) \arrow[swap]{dl}{\widetilde{Q}} \\ & \text{SSYT}_{\phi(v)}(\lambda(v)) &
\end{tikzcd}
\]
\end{theorem}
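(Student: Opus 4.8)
The plan is to reduce the vexillary case to the Grassmannian case of Theorem~\ref{t:Grassmannian} by transporting the identity $\gamma = \widetilde{Q} \circ \varphi$ along a path of bumps. Fix $v$ vexillary and $B \in \text{BPD}(v)$, and set $P = \varphi(B) \in \text{PD}(v)$. By Proposition~\ref{p:Little-Grassmannian} there is a sequence of Little bumps $L_{i_1 j_1}, \dots, L_{i_k j_k}$ and a Grassmannian permutation $w$ with $P' := (L_{i_k j_k} \circ \dots \circ L_{i_1 j_1})(P) \in \text{PD}(w)$. Using the canonical nature of the Gao--Huang bijection (Theorem~\ref{t:canonical}), I would lift this path to the bumpless side: define $B_t = (H_{i_t j_t} \circ \dots \circ H_{i_1 j_1})(B)$, so that $B_0 = B$ and, by induction from the relation $L_{ij} \circ \varphi = \varphi \circ H_{ij}$, one obtains $\varphi(B_t) = (L_{i_t j_t} \circ \dots \circ L_{i_1 j_1})(P)$. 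In particular $\varphi(B_k) = P'$, so $B_k \in \text{BPD}(w)$ with $w$ Grassmannian. The applicability of each $H_{i_t j_t}$ matches that of $L_{i_t j_t}$ because $\varphi$ preserves the underlying permutation, so the length condition $\ell(u\, t_{ij}) = \ell(u) - 1$ transfers from one side to the other.

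With the path in hand, two invariance statements finish the argument. On the pipe dream side, Proposition~\ref{p:hamaker-young} says each Little bump preserves the recording tableau, so $\widetilde{Q}(\varphi(B)) = \widetilde{Q}(P) = \widetilde{Q}(P') = \widetilde{Q}(\varphi(B_k))$; note this holds irrespective of vexillarity. On the bumpless side, Corollary~\ref{c:huang-vexillary} says a Huang bump between two bumpless pipe dreams for vexillary permutations preserves $\gamma$; applied to each step $B_{t-1} \to B_t$ this telescopes to $\gamma(B) = \gamma(B_k)$. Finally, $w$ is Grassmannian, so Theorem~\ref{t:Grassmannian} yields $\gamma(B_k) = \widetilde{Q}(\varphi(B_k))$. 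Chaining these equalities gives $\gamma(B) = \gamma(B_k) = \widetilde{Q}(\varphi(B_k)) = \widetilde{Q}(\varphi(B))$, which is exactly the commutativity claimed in Theorem~\ref{t:main}.

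The one hypothesis I have silently used, and which I expect to be the crux, is that \emph{every} permutation along the path $B = B_0, B_1, \dots, B_k$ is vexillary: Corollary~\ref{c:huang-vexillary} preserves $\gamma$ only when both its input and output bumpless pipe dreams are vexillary, so the telescoping $\gamma(B) = \gamma(B_k)$ needs vexillarity at every intermediate step, not merely at the endpoints $v$ and $w$. Thus the main work is to show that the Little-bump reduction of Proposition~\ref{p:Little-Grassmannian}, started from a vexillary permutation, can be chosen to pass only through vexillary permutations. To establish this I would use that Little (hence Huang) bumps preserve length, together with Lemma~\ref{l:huang-vexillary}, which guarantees that a Huang bump on a vexillary bumpless pipe dream does not move any $\inlinenowire$-tile and so preserves the shape data $\lambda$; I would then argue, via $2143$-avoidance of the Rothe diagram and the fact that the reduction terminates at a (vexillary) Grassmannian permutation, that the flag is straightened monotonically toward the constant Grassmannian flag and that no intermediate permutation can acquire a $2143$-pattern. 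Verifying this stability of vexillarity along the reduction is the technical heart of the proof; once it is in place, the chaining above is purely formal.
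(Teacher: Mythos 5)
Your reduction is exactly the paper's: the same sequence of Little bumps from Proposition~\ref{p:Little-Grassmannian}, lifted to Huang bumps step by step via Theorem~\ref{t:canonical}, with $\widetilde{Q}$ held fixed by Proposition~\ref{p:hamaker-young}, $\gamma$ held fixed by Corollary~\ref{c:huang-vexillary}, and the Grassmannian endpoint resolved by Theorem~\ref{t:Grassmannian}. Where you part ways with the paper is the issue you flag in your last paragraph, and there your diagnosis of ``the technical heart'' misidentifies what is actually needed. You assume the only way to obtain $\gamma(B) = \gamma(B_k)$ is to telescope Corollary~\ref{c:huang-vexillary} through every intermediate step, which would indeed require every intermediate permutation to be vexillary, and you then leave that stability claim unproven --- your sketch about monotone straightening of flags and non-creation of $2143$-patterns is a plan, not an argument. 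As written, your proof therefore has a gap exactly where you predicted one.

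The paper sidesteps the issue: vexillarity is used only at the two endpoints, to know that $\gamma$ is defined on $B$ and $B'$ and to invoke Lemma~\ref{p:BPDcrossings} at the start. The mechanism behind Corollary~\ref{c:huang-vexillary} is Lemma~\ref{l:huang-vexillary}: a Huang bump initializes at a crossing tile and performs Monk moves confined to tiles weakly southeast of crossings, while by Lemma~\ref{p:BPDcrossings} all crossings of $B$ lie outside the northwest-closed region $Y_{\rho(v)}$; hence the bump never alters a tile of $Y_{\rho(v)}$. This propagates across the whole composite with the invariant ``agrees with $B$ on $Y_{\rho(v)}$'': any intermediate diagram satisfying the invariant has no crossings inside $Y_{\rho(v)}$ (because $B$ has none there), so the next bump also stays outside, whether or not the intermediate permutation is vexillary. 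Thus $B$ and $B' = B_k$ agree on $Y_{\rho(v)}$, and since $v$ and the Grassmannian $w$ are both vexillary, $\gamma(B) = \gamma(B')$, which is how the paper applies Corollary~\ref{c:huang-vexillary} to the endpoint pair in one stroke. Your intermediate-vexillarity claim is plausibly true for the deterministic transition-tree path (the paper alludes to this in its final section), but proving it would drag in Lascoux--Sch\"utzenberger transition theory to establish a statement the argument never requires; replacing your third paragraph with the tile-level invariant above completes your proof and recovers the paper's.
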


\begin{proof}
Let $B \in \text{BPD}(v)$ and $P  = \varphi(B)$.
By Proposition~\ref{p:Little-Grassmannian}, there exists a sequence of Little bumps $L_{i_1j_1}, \dots, L_{i_kj_k}$ so that
\[
P' = L_{i_kj_k} \circ \dots \circ L_{i_1 j_1} P
\] is a reduced compatible sequence for a Grassmannian permutation $w$.
Set $B' = \varphi^{-1}(P')$.
By applying Theorem~\ref{t:canonical} repeatedly, we then have 
\[
B' = H_{i_kj_k} \circ \dots \circ H_{i_1 j_1} B.
\]
Then $\gamma(B) = \gamma(B') = \widetilde{Q}(P') = \widetilde{Q}(P) = \widetilde{Q}(\varphi(B))$, with the first three equalities following from
Corollary~\ref{c:huang-vexillary}, Theorem~\ref{t:Grassmannian} and Proposition~\ref{p:hamaker-young}, respectively and the fourth by construction.
\end{proof}

Since $\varphi$ and $\gamma$ are bijections, Theorem~\ref{t:lenart} is an immediate corollary.

\section{Huang bumps and the Little bijection}
\label{s:recording}

The Little bijection, as originally defined, maps a reduced word $\textbf{a}$ to a tableau $\text{LS}(\mathbf{a})$ and path $(w^1,\dots,w^k)$ of permutations in the Lascoux-Sch\"utzenberger tree.
To invert the map, one sends $\text{LS}(\textbf{a})$ to a Grassmannian reduced word with appropriate descent, then applies a Little bump for each edge in the path.
The main result of~\cite{hamaker2014relating} shows that (with appropriate conventions) $\text{LS}(\textbf{a}) = \widetilde{Q}(\textbf{a})$.
In principle, the data encoded by $(w^1,\dots,w^k)$ and $\widetilde{P}(\textbf{a})$ are equivalent, but a direct correspondence is not known.
One subtlety is that the Little bijection frequently requires working with permutations whose support contains negative values.

In Sections~\ref{ss:little} and~\ref{ss:little-A}, we present an extension of Little bumps to pipe dreams.
The Little bijection extends to biwords via the same filling procedure we give to construct $\widetilde{Q}$ in Section~\ref{ss:EG-A}.
However, the intermediate biwords constructed via the original map may violate the reduced compatible conditions. 
By instead applying the sequence of bumps constructed in Proposition~\ref{p:Little-Grassmannian}, one has a construction for pipe dreams in the spirit of the original Little bijection, which we call the \emph{Little bijection for pipe dreams}.

The canonical nature of the Gao-Huang bijection (Theorem~\ref{t:canonical}) allows one to work with Little bumps for pipe dreams and Huang bumps for bumpless pipe dreams interchangeably.
In particular, the pipe dream analogue of the Little bijection for pipe dreams extends to Huang bumps.

\begin{definition}
\label{d:huang-bijection}
	Let $w \in S_\infty$, $B \in \text{BPD}(w)$ and $P = \varphi(B)$.
	With $L_{i_1j_1} \dots L_{i_kj_k}$ the sequence of Little bumps for $P$ in Proposition~\ref{p:Little-Grassmannian}, the \emph{Huang bijection} maps $B$ to the pair
	\[
	(w^0, w^1, \dots, w^k) \quad \mbox{and} \quad \text{LS}(B) = \gamma \left( H_{i_kj_k} \circ \dots \circ H_{i_1 j_1} B\right)
	\]
	where $w^0 = w$ and  $w^\ell$ is the permutation of $H_{i_\ell j_\ell} \circ \dots \circ H_{i_1 j_1} B$ for $\ell > 0$.
\end{definition}

By repeated application of $\varphi$ and Theorem~\ref{t:canonical}, we see the Huang bijection is well-defined.
Moreover, it is invertible since $\gamma$ is invertible and each Huang bump is invertible if one knows the source permutation.

\begin{proposition}
\label{p:two-bijections}
	The Huang bijection and the Little bijection for pipe dreams coincide.
\end{proposition}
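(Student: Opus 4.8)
The plan is to verify that the two bijections agree on each of their two output components separately: the path of permutations and the recording tableau. By construction both bijections are driven by the same data, namely the sequence of Little bumps $L_{i_1 j_1}, \dots, L_{i_k j_k}$ attached to $P = \varphi(B)$ via Proposition~\ref{p:Little-Grassmannian}. The Little bijection for pipe dreams applies these bumps directly to $P$, while the Huang bijection of Definition~\ref{d:huang-bijection} applies the corresponding Huang bumps $H_{i_1 j_1}, \dots, H_{i_k j_k}$ to $B$. Since $\varphi$ identifies $\text{BPD}(u)$ with $\text{PD}(u)$ for every $u$, the precise meaning of ``coincide'' is that the Huang bijection applied to $B$ produces the same pair as the Little bijection for pipe dreams applied to $\varphi(B)$, and this is the equality I would establish.

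First I would handle the path of permutations. Writing $B^{(\ell)} = H_{i_\ell j_\ell} \circ \dots \circ H_{i_1 j_1} B$ and $P^{(\ell)} = L_{i_\ell j_\ell} \circ \dots \circ L_{i_1 j_1} P$, repeated application of Theorem~\ref{t:canonical} gives $\varphi(B^{(\ell)}) = P^{(\ell)}$ for every $\ell$. Because $\varphi$ restricts to a bijection $\text{BPD}(u) \to \text{PD}(u)$ for each permutation $u$, it preserves the underlying permutation; hence $B^{(\ell)}$ and $P^{(\ell)}$ determine the same permutation $w^\ell$, and the two paths $(w^0, \dots, w^k)$ agree.

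Next I would treat the recording tableau. Let $B' = B^{(k)}$, so that $\varphi(B') = P^{(k)} = P'$ is a reduced compatible sequence for the Grassmannian permutation $w^k$; in particular $B' \in \text{BPD}(w^k)$ with $w^k$ Grassmannian. The Huang bijection outputs $\text{LS}(B) = \gamma(B')$, while the Little bijection for pipe dreams outputs $\widetilde{Q}(P')$. By Theorem~\ref{t:Grassmannian} applied to the Grassmannian permutation $w^k$, we have $\gamma(B') = \widetilde{Q}(\varphi(B')) = \widetilde{Q}(P')$, so the two tableaux already agree at the Grassmannian end of the path. Finally, since each Little bump preserves the recording tableau by Proposition~\ref{p:hamaker-young}, one has $\widetilde{Q}(P') = \widetilde{Q}(P)$, confirming that this common tableau is exactly the $\widetilde{Q}$-tableau that the Little bijection for pipe dreams assigns to $P$.

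Combining the two components completes the argument, which is essentially a repackaging of the proof of Theorem~\ref{t:main}; accordingly I do not anticipate a serious computational obstacle, and the only real care required is definitional. The main subtlety will be pinning down precisely what the Little bijection for pipe dreams records: confirming that its tableau output is $\widetilde{Q}(P)$ (through the filling procedure of Section~\ref{ss:EG-A} together with the main result of~\cite{hamaker2014relating} identifying $\text{LS}$ with $\widetilde{Q}$) and that its path is read off from the permutations of the intermediate pipe dreams. Only with these conventions fixed does the comparison with Definition~\ref{d:huang-bijection} become a genuine equality of pairs rather than merely of one coordinate.
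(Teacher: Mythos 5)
Your proof is correct and is essentially the paper's own argument made explicit: the paper's terse proof cites ``the previous discussion'' (your path comparison via repeated application of Theorem~\ref{t:canonical}) together with Theorem~\ref{t:Grassmannian} and the identification of the usual Grassmannian correspondence with $\widetilde{Q}$ from~\cite{hamaker2014relating} (your tableau comparison at the Grassmannian endpoint). Your extra step invoking Proposition~\ref{p:hamaker-young} to identify the common tableau with $\widetilde{Q}(P)$ is correct but not needed for the coincidence itself; it is really the content of Corollary~\ref{c:recording-huang}.
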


\begin{proof}
	This follows from the previous discussion and the fact that $\gamma$ coincides with usual correspondence for Grassmannian reduced words, which follows from Theorem~\ref{t:Grassmannian} and~\cite[Lem.~5]{hamaker2014relating}.
\end{proof}

The bulk of our proof for Theorem~\ref{t:main} is embedded in Proposition~\ref{p:two-bijections}.
The only missing piece is Corollary~\ref{c:recording-huang}, which equivalently shows we only need to apply Huang bumps in Definition~\ref{d:huang-bijection} until the corresponding permutation $w^\ell$ is vexillary.

For the vexillary case, we understand the range of these maps precisely: the path of permutations is deterministic and $\text{LS}$ maps to flagged tableaux.
Outside of this setting the set of paths can be determined, but the image of $\text{LS}$ does not have a simple description.
However, Proposition~\ref{p:hamaker-young} applies, giving:

\begin{corollary}
	\label{c:recording-huang}
	For $w \in S_\infty$ and $B \in \text{BPD}(w)$, we have
	\[
	\text{LS}(B) = \widetilde{Q}(\varphi(B)).
	\]
\end{corollary}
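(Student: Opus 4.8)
The plan is to unwind the definition of $\text{LS}(B)$ from Definition~\ref{d:huang-bijection} and apply the two key properties of the bumps established earlier. By definition,
\[
\text{LS}(B) = \gamma\left(H_{i_kj_k} \circ \dots \circ H_{i_1j_1} B\right),
\]
where $L_{i_1j_1}, \dots, L_{i_kj_k}$ is the sequence of Little bumps from Proposition~\ref{p:Little-Grassmannian} applied to $P = \varphi(B)$, chosen so that $P' = L_{i_kj_k} \circ \dots \circ L_{i_1j_1} P$ is a reduced compatible sequence for a Grassmannian permutation $w$. This is almost exactly the setup in the proof of Theorem~\ref{t:main}, so I would follow the identical chain of reasoning.

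First I would set $B' = H_{i_kj_k} \circ \dots \circ H_{i_1j_1} B$, so that $\text{LS}(B) = \gamma(B')$. Repeated application of Theorem~\ref{t:canonical} gives $\varphi(B') = P'$, hence $B' = \varphi^{-1}(P')$ is a bumpless pipe dream for the Grassmannian permutation $w$. Since $w$ is in particular vexillary, Theorem~\ref{t:Grassmannian} applies and yields $\gamma(B') = \widetilde{Q}(\varphi(B')) = \widetilde{Q}(P')$. It then remains to connect $\widetilde{Q}(P')$ back to $\widetilde{Q}(P)$. This is precisely the content of Proposition~\ref{p:hamaker-young}, which asserts that each Little bump $L_{ij}$ preserves the recording tableau $\widetilde{Q}$; applying it once for each of the $k$ bumps gives $\widetilde{Q}(P') = \widetilde{Q}(P) = \widetilde{Q}(\varphi(B))$. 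Stringing these equalities together produces
\[
\text{LS}(B) = \gamma(B') = \widetilde{Q}(P') = \widetilde{Q}(\varphi(B)),
\]
as desired.

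The crucial point, and the one I would be most careful to justify, is that Proposition~\ref{p:hamaker-young} is exactly the ingredient that lets us dispense with any vexillarity hypothesis on $w$ itself. Unlike in Theorem~\ref{t:main}, where Corollary~\ref{c:huang-vexillary} was invoked to show $\gamma(B) = \gamma(B')$ using that the intermediate permutations stayed vexillary, here we make no such assumption: $\text{LS}(B)$ is \emph{defined} as $\gamma(B')$ rather than as $\gamma(B)$, so there is nothing to prove on the bumpless side beyond the Grassmannian base case. All the work is transported to the pipe dream side, where Proposition~\ref{p:hamaker-young} holds for arbitrary $w \in S_\infty$ and arbitrary admissible $(i,j)$. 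I expect the main subtlety to be bookkeeping rather than substance: one must confirm that the sequence of bumps used to define $\text{LS}(B)$ is the same sequence referenced by Proposition~\ref{p:two-bijections} and that Theorem~\ref{t:canonical} may indeed be iterated along the whole path $(w^0, w^1, \dots, w^k)$, which is guaranteed since each $H_{i_\ell j_\ell}$ lowers length by one and the $\varphi$-images track the Little bumps step by step. Given the machinery already assembled, the proof is a short composition of Theorem~\ref{t:canonical}, Theorem~\ref{t:Grassmannian}, and Proposition~\ref{p:hamaker-young}, and I would present it in two or three lines as the excerpt's own proof does.
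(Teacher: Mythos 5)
Your proposal is correct and takes essentially the same route as the paper, which obtains Corollary~\ref{c:recording-huang} by unwinding Definition~\ref{d:huang-bijection}, iterating Theorem~\ref{t:canonical} to identify $\varphi(B')$ with $P'$, handling the Grassmannian endpoint via Theorem~\ref{t:Grassmannian}, and transferring $\widetilde{Q}$ back along the Little bumps with Proposition~\ref{p:hamaker-young}. Your observation that no vexillarity hypothesis (and hence no appeal to Corollary~\ref{c:huang-vexillary}) is needed because $\text{LS}(B)$ is \emph{defined} as $\gamma(B')$ rather than $\gamma(B)$ is precisely the point the paper makes in the surrounding discussion.
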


In principle, the sequence of permutations 
$
(w^0,\dots,w^k)
$
constructed in the Huang bijection encodes the same data as the Edelman-Greene insertion tableau $\widetilde{P}(\varphi(B))$.
However, there is no immediate correspondence.
There is another way to associate an insertion tableau to a bumpless pipe dream, obtained by composing the map in~\cite[Thm.~5.19]{lam2021back} with either the map $\Omega$ in~\cite[Thm.~1.7]{weigandt2021bumpless} or the map in~\cite[Thm.~6.1]{fan2018bumpless}, but it is unclear whether either is consistent with $\text{LS}$.
Forthcoming work of Huang, Shimozono and Yu gives another, more general, realization of Edelman-Greene insertion on bumpless pipe dreams~\cite{private}, so we choose not to pursue this direction further.

\begin{remark}
Recently, Huang and Pylyavskyy introduced two new bijections mapping bumpless pipe dreams to labeled chains of permutations~\cite{huang2022bumpless}.
Neither of their maps are precisely equivalent to the Huang bijection we describe, but they encode the same data.
Both maps are based on what we call inverse Huang bumps, and their right insertion is equivalent to the map in~\cite{billey2019bijective}.
One difference is that their chains involve permutations with different lengths, while the permutations in the Huang bijection all have the same length.

\end{remark}

\appendix

\section{Details of Maps}

\subsection{Edelman-Greene insertion}
\label{ss:EG-A}

\begin{definition}
\label{d:eg-insert}
Let $C = \{c_1 < \ldots < c_\ell\}$.
The \emph{Edelman-Greene insertion rule} for inserting a positive integer $x \neq c_\ell$ into $C$ is as follows:

\begin{itemize}
\item[1.)] If $C$ is empty or $x > c_{\ell}$  then append $x$ to the bottom of $C$.
\item[2.)] If $x < c_{\ell}$ let $k$ be the smallest row index for which $x < c_k$. 
\begin{itemize}
\item[(a)] If $c_k = x+1$ and $c_{k-1} = x$ then leave $C$ unchanged.
\item[(b)] Otherwise, replace $c_k$ with $x$. 
\end{itemize}
\end{itemize}
\end{definition}

Using the Edelman-Greene insertion rule, we explain how to insert an entry into a tableau.

\begin{definition}
	\label{d:eg-tableau}
Let $T$ be a semistandard tableau with columns $C_1, \ldots, C_m$ where $C_i = \{c_1^i < \ldots < c_{\ell_i}^i\}$.
To \emph{Edelman-Greene insert} $x$ into $T$, denoted $T \uparrow x$, iterate the following:
Using the Edelman-Greene insertion rule, insert $x$ into $C_1$.

\begin{itemize}
\item[1.)] If $x$ was appended to the bottom of $C_1$ then stop.
\item[2.)] If $C_1$ was left unchanged then insert $x+1$ into the tableau with columns $C_2, \ldots, C_m$. 
\item[3.)] If $x$ replaced $c_k^1$ in $C_1$ then insert $c_k^1$ into the tableau with columns $C_2, \ldots, C_m$. 
\end{itemize}
\end{definition}

Note this algorithm is not well defined for all integers and tableaux, since we could have $x = c_\ell$.
Item 2 is where Edelman-Greene insertion differs from RSK. 

\begin{definition}
\label{d:EG}
For a reduced compatible sequence $\binom{\mathbf{r}}{\mathbf{a}} = \binom{r_1, \ldots, r_\ell}{a_1, \ldots, a_\ell}$, use the Edelman-Greene algorithm to output a pair of tableaux $(\widetilde{P},\widetilde{Q})$ as follows. The $\widetilde{P}$-tableau is 
\[
\widetilde{P}\left( \binom{\mathbf{r}}{\mathbf{a}} \right) = (((\varnothing \uparrow a_1) \uparrow a_2) \ldots \uparrow a_\ell).
\]
Meanwhile, the $\widetilde{Q}$-tableau is constructed simultaneously by placing $r_i$ in the new box created when inserting $a_i$.
\end{definition}
It is a non-trivial fact that, when inserting a reduced word, every step is well-defined during this process~\cite{edelman1987balanced}.

For example, one has
\begin{align*}
\widetilde{P}\left( \binom{1,1,2,2}{4,2,3,2} \right) &: \quad 
\varnothing \quad 
\begin{ytableau}
4
\end{ytableau} \quad 
\begin{ytableau}
2 & 4
\end{ytableau} \quad 
\begin{ytableau}
2 & 4 \\ 3
\end{ytableau} \quad 
\begin{ytableau}
2 & 3 & 4 \\ 3
\end{ytableau} \\
\widetilde{Q}\left( \binom{1,1,2,2}{4,2,3,2} \right) &: \quad
\varnothing \quad 
\begin{ytableau}
1
\end{ytableau} \quad 
\begin{ytableau}
1 & 1
\end{ytableau} \quad 
\begin{ytableau}
1 & 1 \\ 2
\end{ytableau} \quad 
\begin{ytableau}
1 & 1 & 2 \\ 2
\end{ytableau} 
\end{align*}
where Definition~\ref{d:eg-insert} 2(a) was used when inserting the second $2$.

\subsection{The Gao-Huang bijection}
\label{ss:gao-huang-A}

Given a bumpless pipe dream $B$ write $B_{ij}$ for the tile of $B$ in location $(i,j)$. Let $B(\inlinenowire)$ denote the set of $(i,j)$ such that $B_{ij} = \inlinenowire$ and likewise for other tiles. 
We now recall \emph{droop moves} on a bumpless pipe dream.

\begin{definition}
For a bumpless pipe dream $B$, say there is an \emph{available droop} from $(i,j)$ into $(k,\ell)$ with $k > i$ and $\ell > j$ if 
\[
B_{ij} = \are, \quad B_{k \ell} = \nowire, \quad \text{and} \quad B_{pq} \neq \are, \jay
\]
for any $i \leq p \leq k$ and $j \leq q \leq \ell$. When available, a \emph{droop move} from $(i,j)$ into $(k,\ell)$ reroutes the pipe going through $(i,j)$ to instead go through $(k,\ell)$ as shown below. 
\[
\begin{tikzcd}
\begin{tikzpicture}[scale=0.4]
\node at (0,0) {$k$};
\node at (0,3) {$i$};
\node at (4,6) {$j$};
\node at (8,6) {$\ell$};
\node at (4,0) {\vwire};
\node at (4,1) {\vwire};
\node at (4,2) {\vwire};
\node at (4,3) {\are};
\node at (5,3) {\hwire};
\node at (6,3) {\hwire};
\node at (7,3) {\hwire};
\node at (8,3) {\hwire};
\node at (8,0) {\nowire};
\node at (7,0) {\shade};
\node at (6,0) {\shade};
\node at (5,0) {\shade};
\node at (5,1) {\shade};
\node at (6,1) {\shade};
\node at (7,1) {\shade};
\node at (8,1) {\shade};
\node at (8,2) {\shade};
\node at (7,2) {\shade};
\node at (6,2) {\shade};
\node at (5,2) {\shade};

\draw[dashed] (1,-2) -- (1,5) -- (10,5);
\end{tikzpicture} \arrow[start anchor = real east, end anchor = real west]{r}{\text{droop}} &
\begin{tikzpicture}[scale=0.4]
\node at (0,0) {$k$};
\node at (0,3) {$i$};
\node at (4,6) {$j$};
\node at (8,6) {$\ell$};
\node at (4,0) {\are};
\node at (4,1) {\shade};
\node at (4,2) {\shade};
\node at (4,3) {\nowire};
\node at (5,3) {\shade};
\node at (6,3) {\shade};
\node at (7,3) {\shade};
\node at (8,3) {\are};
\node at (8,0) {\jay};
\node at (7,0) {\hwire};
\node at (6,0) {\hwire};
\node at (5,0) {\hwire};
\node at (5,1) {\shade};
\node at (6,1) {\shade};
\node at (7,1) {\shade};
\node at (8,1) {\vwire};
\node at (8,2) {\vwire};
\node at (7,2) {\shade};
\node at (6,2) {\shade};
\node at (5,2) {\shade};

\draw[dashed] (1,-2) -- (1,5) -- (10,5); 
\end{tikzpicture}
\arrow[start anchor = base west, end anchor = base east]{l}{\text{undroop}}
\end{tikzcd}
\]
The $\inlineshade$-tiles must be from $\{\inlinenowire, \inlinevwire, \inlinehwire, \inlinecross\}$ and the $\inlinevwire$ and $\inlinehwire$ tiles might actually be $\inlinecross$ tiles in $B$.
Note that droop moves are invertible.
An \emph{undroop} from $(k,\ell)$ into $(i,j)$ is the inverse of the droop move from $(i,j)$ into $(k,\ell)$.
\end{definition}

We now give the definition of $\nabla$.

\begin{definition}
\label{d:nabla}
Given $B$ in $\text{BPD}(w)$, the following procedure records $\text{pop}(B) = \binom{r}{a}$ and creates $\nabla B$ in $\text{BPD}(s_a w)$.
To begin, let $r$ be the smallest row index in which $B$ has $\inlinenowire$-tiles. Mark the rightmost $\inlinenowire$-tile in row $r$ with an $\times$.
\begin{itemize}
\item[(1)] Move the mark $\times$ to the rightmost $\inlinenowire$-tile in a contiguous block of $\inlinenowire$-tiles in its row. Let $(i,j)$ be the location of the marked tile and let $p$ be the pipe going through $(i,j+1)$. 
\item[(2)] If $p \neq j+1$ then it must have a $\inlinejay$-tile at $(i',j+1)$ for some $i' > i$. Consider the $(i'-i) \times 2$ rectangle $R$ with northwest corner $(i,j)$ and southeast corner $(i',j+1)$. 
\begin{itemize}
\item[(a)] For any pipe $q$ crossing through $R$ having both a $\inlineare$-tile and a $\inlinejay$-tile in column $j$, say in rows $i < k < k' < i'$, droop the pipe $q$ at $(k,j)$ into $(k',j+1)$ whilst ignoring the pipe $p$. 
\item[(b)] Undroop pipe $p$ at $(i',j+1)$ into $(i,j)$. Move the mark $\times$ to $(i',j+1)$ and go back to Step (1).
\end{itemize}
\item[(3)] If $p = j+1$ then pipes $j$ and $j+1$ must intersect at some $(i',j+1)$ with $i' > i$. Replace this $\inlinecross$-tile with a $\inlinebump$-tile but then immediately undroop the $\inlinejay$-turn of pipe $j$ in this $\inlinebump$-tile into $(i,j)$, adjusting any intersecting pipes between rows $i$ and $i'$ in the same fashion as Step 2(a) above. Set $a = j$, the column index of the left column in this last $(i'-i) \times 2$ rectangle of tiles to be modified. The result is a bumpless pipe dream $\nabla B$ for $s_a w$.
\end{itemize}
The process ends after Step (3) and we output $\text{pop}(B) = \binom{r}{a}$ and $\nabla B$.
See Figure~\ref{fig:gao-huang} for examples of $\nabla$ and $\text{Pop}$.
\end{definition}

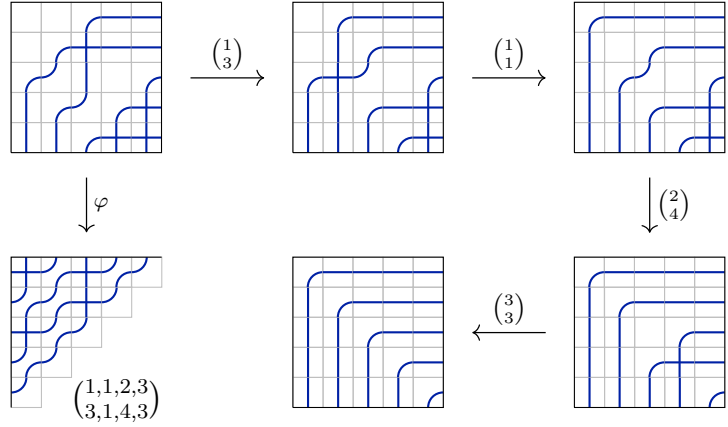
\begin{figure}
\scalebox{1}{
\begin{tikzcd}[ampersand replacement=\&]
\begin{tikzpicture}[scale=0.4]
\node at (0,0) {\vwire};
\node at (1,0) {\vwire};
\node at (2,0) {\are};
\node at (3,0) {\cross};
\node at (4,0) {\cross};

\node at (0,1) {\vwire};
\node at (1,1) {\are};
\node at (2,1) {\jay};
\node at (3,1) {\are};
\node at (4,1) {\cross};

\node at (0,2) {\are};
\node at (1,2) {\jay};
\node at (2,2) {\vwire};
\node at (3,2) {\nowire};
\node at (4,2) {\are};

\node at (0,3) {\nowire};
\node at (1,3) {\are};
\node at (2,3) {\cross};
\node at (3,3) {\hwire};
\node at (4,3) {\hwire};

\node at (0,4) {\nowire};
\node at (1,4) {\nowire};
\node at (2,4) {\are};
\node at (3,4) {\hwire};
\node at (4,4) {\hwire};

\draw (-0.5,0) -- (-0.5,5) -- (4.5,5) -- (4.5,0) -- (-0.5,0);
\end{tikzpicture} \arrow{d}{\varphi} \arrow[start anchor=real east, end anchor=real west]{r}{\binom{1}{3}} \&
\begin{tikzpicture}[scale=0.4]
\node at (0,0) {\vwire};
\node at (1,0) {\vwire};
\node at (2,0) {\vwire};
\node at (3,0) {\are};
\node at (4,0) {\cross};

\node at (0,1) {\vwire};
\node at (1,1) {\vwire};
\node at (2,1) {\are};
\node at (3,1) {\hwire};
\node at (4,1) {\cross};

\node at (0,2) {\are};
\node at (1,2) {\cross};
\node at (2,2) {\jay};
\node at (3,2) {\nowire};
\node at (4,2) {\are};

\node at (0,3) {\nowire};
\node at (1,3) {\vwire};
\node at (2,3) {\are};
\node at (3,3) {\hwire};
\node at (4,3) {\hwire};

\node at (0,4) {\nowire};
\node at (1,4) {\are};
\node at (2,4) {\hwire};
\node at (3,4) {\hwire};
\node at (4,4) {\hwire};

\draw (-0.5,0) -- (-0.5,5) -- (4.5,5) -- (4.5,0) -- (-0.5,0);
\end{tikzpicture} \arrow[start anchor=real east, end anchor=real west]{r}{\binom{1}{1}} \&
\begin{tikzpicture}[scale=0.4]
\node at (0,0) {\vwire};
\node at (1,0) {\vwire};
\node at (2,0) {\vwire};
\node at (3,0) {\are};
\node at (4,0) {\cross};

\node at (0,1) {\vwire};
\node at (1,1) {\vwire};
\node at (2,1) {\are};
\node at (3,1) {\hwire};
\node at (4,1) {\cross};

\node at (0,2) {\vwire};
\node at (1,2) {\are};
\node at (2,2) {\jay};
\node at (3,2) {\nowire};
\node at (4,2) {\are};

\node at (0,3) {\vwire};
\node at (1,3) {\nowire};
\node at (2,3) {\are};
\node at (3,3) {\hwire};
\node at (4,3) {\hwire};

\node at (0,4) {\are};
\node at (1,4) {\hwire};
\node at (2,4) {\hwire};
\node at (3,4) {\hwire};
\node at (4,4) {\hwire};

\draw (-0.5,0) -- (-0.5,5) -- (4.5,5) -- (4.5,0) -- (-0.5,0);
\end{tikzpicture} \arrow{d}{\binom{2}{4}} \\
\begin{tikzpicture}[scale=0.4]
\node at (0,0) {\jay};

\node at (0,1) {\bump};
\node at (1,1) {\jay};

\node at (0,2) {\cross};
\node at (1,2) {\bump};
\node at (2,2) {\jay};

\node at (0,3) {\bump};
\node at (1,3) {\bump};
\node at (2,3) {\cross};
\node at (3,3) {\jay};

\node at (0,4) {\cross};
\node at (1,4) {\bump};
\node at (2,4) {\cross};
\node at (3,4) {\bump};
\node at (4,4) {\jay};

\node at (3,0) {$\binom{1,1,2,3}{3,1,4,3}$};

\draw (-0.5,0) -- (-0.5,5) -- (4.5,5);
\end{tikzpicture} \&
\begin{tikzpicture}[scale=0.4]
\node at (0,0) {\vwire};
\node at (1,0) {\vwire};
\node at (2,0) {\vwire};
\node at (3,0) {\vwire};
\node at (4,0) {\are};

\node at (0,1) {\vwire};
\node at (1,1) {\vwire};
\node at (2,1) {\vwire};
\node at (3,1) {\are};
\node at (4,1) {\hwire};

\node at (0,2) {\vwire};
\node at (1,2) {\vwire};
\node at (2,2) {\are};
\node at (3,2) {\hwire};
\node at (4,2) {\hwire};

\node at (0,3) {\vwire};
\node at (1,3) {\are};
\node at (2,3) {\hwire};
\node at (3,3) {\hwire};
\node at (4,3) {\hwire};

\node at (0,4) {\are};
\node at (1,4) {\hwire};
\node at (2,4) {\hwire};
\node at (3,4) {\hwire};
\node at (4,4) {\hwire};

\draw (-0.5,0) -- (-0.5,5) -- (4.5,5) -- (4.5,0) -- (-0.5,0);
\end{tikzpicture} \& 
\begin{tikzpicture}[scale=0.4]
\node at (0,0) {\vwire};
\node at (1,0) {\vwire};
\node at (2,0) {\vwire};
\node at (3,0) {\vwire};
\node at (4,0) {\are};

\node at (0,1) {\vwire};
\node at (1,1) {\vwire};
\node at (2,1) {\are};
\node at (3,1) {\cross};
\node at (4,1) {\hwire};

\node at (0,2) {\vwire};
\node at (1,2) {\vwire};
\node at (2,2) {\nowire};
\node at (3,2) {\are};
\node at (4,2) {\hwire};

\node at (0,3) {\vwire};
\node at (1,3) {\are};
\node at (2,3) {\hwire};
\node at (3,3) {\hwire};
\node at (4,3) {\hwire};

\node at (0,4) {\are};
\node at (1,4) {\hwire};
\node at (2,4) {\hwire};
\node at (3,4) {\hwire};
\node at (4,4) {\hwire};

\draw (-0.5,0) -- (-0.5,5) -- (4.5,5) -- (4.5,0) -- (-0.5,0);
\end{tikzpicture} \arrow[start anchor=real west, end anchor=real east, swap]{l}{\binom{3}{3}}
\end{tikzcd}
}
\caption{The Gao-Huang bijection.
\label{fig:gao-huang}}	
\end{figure}

\begin{figure}[h!]
\centering
\begin{tikzcd}[ampersand replacement=\&]
\begin{tikzpicture}[scale=0.4]
\node at (0,0) {\vwire};
\node at (1,0) {\vwire};
\node at (2,0) {\vwire};
\node at (3,0) {\vwire};
\node at (4,0) {\are};
\node at (5,0) {\cross};
\node at (6,0) {\cross};

\node at (0,1) {\vwire};
\node at (1,1) {\vwire};
\node at (2,1) {\are};
\node at (3,1) {\cross};
\node at (4,1) {\hwire};
\node at (5,1) {\cross};
\node at (6,1) {\cross};

\node at (0,2) {\are};
\node at (1,2) {\cross};
\node at (2,2) {\hwire};
\node at (3,2) {\cross};
\node at (4,2) {\hwire};
\node at (5,2) {\cross};
\node at (6,2) {\cross};

\node at (0,3) {\nowire};
\node at (1,3) {\vwire};
\node at (2,3) {\nowire};
\node at (3,3) {\are};
\node at (4,3) {\jay};
\node at (5,3) {\vwire};
\node at (6,3) {\are};

\node at (0,4) {\nowire};
\node at (1,4) {\vwire};
\node at (2,4) {\nowire};
\node at (3,4) {\nowire};
\node at (4,4) {\vwire};
\node at (5,4) {\are};
\node at (6,4) {\hwire};

\node at (0,5) {\nowire};
\node at (1,5) {\are};
\node at (2,5) {\jay};
\node at (3,5) {\nowire};
\node at (4,5) {\are};
\node at (5,5) {\hwire};
\node at (6,5) {\hwire};

\node at (0,6) {\nowire};
\node at (1,6) {\nowire};
\node at (2,6) {\are};
\node at (3,6) {\hwire};
\node at (4,6) {\hwire};
\node at (5,6) {\hwire};
\node at (6,6) {\hwire};

\draw[thick] (-0.5,0) -- (-0.5,7) -- (6.5,7) -- (6.5,0) -- (-0.5,0);
\end{tikzpicture} \arrow[start anchor=real east, end anchor=real west]{r}{\binom{1}{5}} \arrow{d}{\gamma} \&
\begin{tikzpicture}[scale=0.4]
\node at (0,0) {\vwire};
\node at (1,0) {\vwire};
\node at (2,0) {\vwire};
\node at (3,0) {\vwire};
\node at (4,0) {\vwire};
\node at (5,0) {\are};
\node at (6,0) {\cross};

\node at (0,1) {\vwire};
\node at (1,1) {\vwire};
\node at (2,1) {\are};
\node at (3,1) {\cross};
\node at (4,1) {\cross};
\node at (5,1) {\hwire};
\node at (6,1) {\cross};

\node at (0,2) {\are};
\node at (1,2) {\cross};
\node at (2,2) {\hwire};
\node at (3,2) {\cross};
\node at (4,2) {\cross};
\node at (5,2) {\hwire};
\node at (6,2) {\cross};

\node at (0,3) {\nowire};
\node at (1,3) {\vwire};
\node at (2,3) {\nowire};
\node at (3,3) {\vwire};
\node at (4,3) {\are};
\node at (5,3) {\jay};
\node at (6,3) {\are};

\node at (0,4) {\nowire};
\node at (1,4) {\vwire};
\node at (2,4) {\nowire};
\node at (3,4) {\vwire};
\node at (4,4) {\nowire};
\node at (5,4) {\are};
\node at (6,4) {\hwire};

\node at (0,5) {\nowire};
\node at (1,5) {\vwire};
\node at (2,5) {\nowire};
\node at (3,5) {\are};
\node at (4,5) {\hwire};
\node at (5,5) {\hwire};
\node at (6,5) {\hwire};

\node at (0,6) {\nowire};
\node at (1,6) {\are};
\node at (2,6) {\hwire};
\node at (3,6) {\hwire};
\node at (4,6) {\hwire};
\node at (5,6) {\hwire};
\node at (6,6) {\hwire};

\draw[thick] (-0.5,0) -- (-0.5,7) -- (6.5,7) -- (6.5,0) -- (-0.5,0);
\end{tikzpicture} \arrow[start anchor=real east, end anchor=real west]{r}{\binom{1}{1}} \arrow{d}{\gamma} \&
\begin{tikzpicture}[scale=0.4]
\node at (0,0) {\vwire};
\node at (1,0) {\vwire};
\node at (2,0) {\vwire};
\node at (3,0) {\vwire};
\node at (4,0) {\vwire};
\node at (5,0) {\are};
\node at (6,0) {\cross};

\node at (0,1) {\vwire};
\node at (1,1) {\vwire};
\node at (2,1) {\are};
\node at (3,1) {\cross};
\node at (4,1) {\cross};
\node at (5,1) {\hwire};
\node at (6,1) {\cross};

\node at (0,2) {\vwire};
\node at (1,2) {\are};
\node at (2,2) {\hwire};
\node at (3,2) {\cross};
\node at (4,2) {\cross};
\node at (5,2) {\hwire};
\node at (6,2) {\cross};

\node at (0,3) {\vwire};
\node at (1,3) {\nowire};
\node at (2,3) {\nowire};
\node at (3,3) {\vwire};
\node at (4,3) {\are};
\node at (5,3) {\jay};
\node at (6,3) {\are};

\node at (0,4) {\vwire};
\node at (1,4) {\nowire};
\node at (2,4) {\nowire};
\node at (3,4) {\vwire};
\node at (4,4) {\nowire};
\node at (5,4) {\are};
\node at (6,4) {\hwire};

\node at (0,5) {\vwire};
\node at (1,5) {\nowire};
\node at (2,5) {\nowire};
\node at (3,5) {\are};
\node at (4,5) {\hwire};
\node at (5,5) {\hwire};
\node at (6,5) {\hwire};

\node at (0,6) {\are};
\node at (1,6) {\hwire};
\node at (2,6) {\hwire};
\node at (3,6) {\hwire};
\node at (4,6) {\hwire};
\node at (5,6) {\hwire};
\node at (6,6) {\hwire};

\draw[thick] (-0.5,0) -- (-0.5,7) -- (6.5,7) -- (6.5,0) -- (-0.5,0);
\end{tikzpicture} \arrow{d}{\gamma} \\
\begin{ytableau}
1&1&2 \\ 2 & 3 & 3 \\ 3 & 4 \\ 4
\end{ytableau} \arrow{r}{\text{jdt}(\gamma(B))} \&
\begin{ytableau}
1 & 2 & 3 \\ 2 & 3 \\ 3 & 4 \\ 4
\end{ytableau} \arrow{r}{\text{jdt}(\gamma(\nabla B))} \&
\begin{ytableau}
2 & 2 & 3 \\ 3 & 3 \\ 4 & 4
\end{ytableau} 
\end{tikzcd}
\caption{An illustration of Proposition~\ref{p:HuangCor}.\label{fig:HuangCor}}
\end{figure}
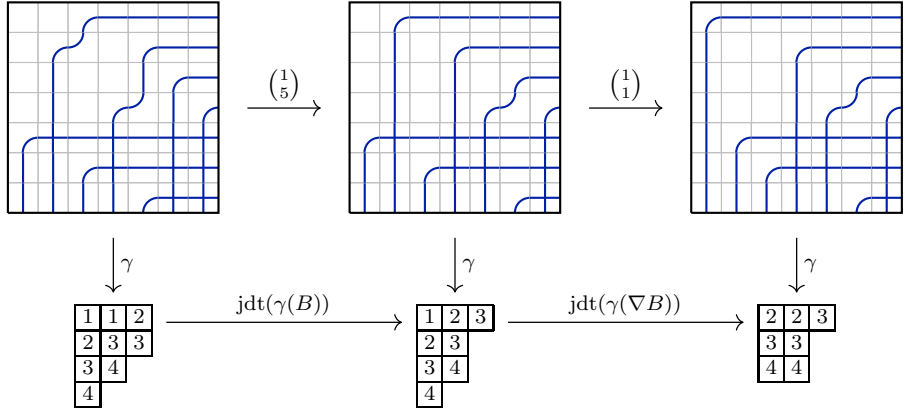

\subsection{Little bumps}
\label{ss:little-A}
We define Little bumps.
\begin{definition}
\label{d:little}
Let $w \in S_\infty$ and fix $(i,j)$ so that $\ell(w t_{ij}) = \ell(w) - 1$.
The \emph{(upward) Little bump} $L_{ij}$ takes as input some $\binom{\textbf{r}}{\textbf{a}} \in \text{PD}(w)$.
Fix $m$ so that wires $i$ and $j$ cross in the $\inlinecross$-tile corresponding to $\binom{r_{m}}{a_{m}}$.
\begin{enumerate}
	\item Set $\textbf{a}^{(1)} := \textbf{a}$, $m_1 := m$ and $k = 1$.
	\item Replace $\binom{r_{m_k}}{a_{m_k}}$ with $\binom{r_{m_k}}{a_{m_1}+k}$ to obtain a new biword $\binom{\textbf{r}}{\textbf{a}^{(k)}}$.
	\item If $\textbf{a}^{(k)}$ is a reduced word, return $\binom{\textbf{r}}{\textbf{a}^{(k)}}$.
	Otherwise, there exists a unique index $m_{k+1} \neq m_{k}$ so that the wires crossing in the $\inlinecross$-tile corresponding to $\binom{r_{m_k}}{a_{m_k}+1}$ are the same as those crossing in the $\inlinecross$-tile corresponding to $\binom{r_{m_{k+1}}}{a_{m_{k+1}}}$ (note these $\inlinecross$-tiles could coincide).
	Repeat (2) using $k+1$.
\end{enumerate}
Call the output $L_{ij}\binom{\textbf{r}}{\textbf{a}}$.
\end{definition}

It is far from obvious that this map is well-defined, that it eventually terminates or that it is invertible.
The fact that the compatible sequence property is preserved follows from the fact that $L_{ij}$ preserves descent sets of reduced words~\cite[Property~1]{little2003combinatorial} and increments values.
We now give proofs of Propositions~\ref{p:Little-Grassmannian} and~\ref{p:hamaker-young}.

\begin{proof}[Proof of Proposition~\ref{p:Little-Grassmannian}]
	This is a straightforward consequence of~\cite[Property~2]{little2003combinatorial}, which says such a sequence of inverse or downward Little bumps exists for every reduced word.
	To see why, let $w \in S_m$.
	Map $\textbf{a} \mapsto \textbf{b}$ via $a_i \mapsto b_i = m-a_i$.
	This map preserves the property that $\textbf{a}$ is a reduced word of a Grassmannian permutation.
	Via this map, a downward Little bump on $\textbf{b}$ corresponds to an upward Little bump on $\textbf{a}$.
	Therefore, the sequence of downward Little bumps mapping $\textbf{b}$ to Grassmannian reduced word $\textbf{b}'$ corresponds to a sequence of (upward) Little bumps mapping $\binom{\textbf{r}}{\textbf{a}}$ to $\binom{\textbf{r}}{\textbf{a}'}$ where $\textbf{a}'$ is a Grassmannian reduced word.
\end{proof}

\begin{proof}[Proof of Proposition~\ref{p:hamaker-young}]
	This is the biword version of {\cite[Prop.~1]{hamaker2014relating}}, as applied to column Edelman-Greene insertion.
	The original statement is for row Edelman-Greene insertion, which differs by transposing the insertion and recording tableaux.
	To upgrade to biwords, note we are inserting the same sequence $\textbf{r}$ into the same standard tableaux.
	The result now follows.
\end{proof}

\subsection{Huang bumps}
\label{ss:huang-A}
An \emph{almost bumpless pipe dream} is a bumpless pipe dream or a diagram with at most one $\inlinebump$, but otherwise satisfies the definition of a bumpless pipe dream.
In order to define Huang bumps, we require two additional maps known as \emph{Monk moves}.

\begin{definition}
Fix an almost bumpless pipe dream. 
\begin{itemize}
\item[(i)] \text{min-droop}: Let $(i,j)$ be a $\inlineare$-turn of a pipe $p$, which could be either a $\inlineare$-tile or a $\inlinebump$-tile. Let $k,\ell \geq 1$ be the smallest numbers such that $(i+k,j)$ and $(i,j+\ell)$ are not $\inlinecross$-tiles. A \emph{min-droop} at $(i,j)$ droops pipe $p$ into $(i+k,j+\ell)$.
\item[(ii)] \text{cross-bump-swap}: Let $(i,j)$ be a $\inlinebump$-tile of pipes $p$ and $q$ and suppose $p$ and $q$ cross each other at $(k,\ell)$. A \emph{cross-bump-swap} move at $(i,j)$ swaps the $\inlinebump$-tile at $(i,j)$ with the $\inlinecross$-tile at $(k,\ell)$.
\end{itemize}
\end{definition}

We are now prepared to define a Huang bump.

\begin{definition}
\label{d:huang}
Let $w$ be a permutation such that $w(i) > w(j)$ for some $i < j$. Given $B$ in $\text{BPD}(w)$, the following gives $H_{ij}(B)$.
\begin{itemize}
\item[(1)] Initialize $(x,y)$ as the $\inlinecross$-tile where pipes $i$ and $j$ cross. Replace this tile with a $\inlinebump$-tile. 
\item[(2)] Perform a min-droop at $(x,y)$ and let $(x',y')$ record the southeast corner of this droop. 
\begin{itemize}
\item[(a)] If $(x',y')$ is a $\inlinejay$-tile then update $(x,y)$ to be the position of the $\inlineare$-tile of pipe $i$ in row $x'$ and repeat Step 2.
\item[(b)] If $(x',y')$ is a $\inlinebump$-tile, let $k$ be the pipe of the $\inlineare$-turn of $(x',y')$. 
\begin{itemize}
\item[(i)] If $i$ and $k$ cross each other at $(x'',y'')$ then perform a cross-bump-swap at $(x',y')$. Update $(x,y) = (x'',y'')$ and repeat Step 2. 
\item[(ii)] If $i$ and $k$ never cross each other then replace $(x',y')$ with a $\inlinecross$-tile and stop.
\end{itemize}
\end{itemize}
\end{itemize}
\end{definition}

To prove Lemma~\ref{l:huang-vexillary}, we require a property of vexillary bumpless pipe dreams.

\begin{lemma}[{\cite[Lem.~7.2]{weigandt2021bumpless}}] \label{p:BPDcrossings}
Suppose $v$ is vexillary and let $\rho(v)$ denote the smallest partition such that $Y_{\rho(v)} \supseteq D_v$. Then one has $B(\inlinecross) \cap Y_{\rho(v)} = \varnothing$ for each $B$ in $\text{BPD}(v)$.
\end{lemma}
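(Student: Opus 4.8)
The plan is to prove the contrapositive: I will show that if some $B \in \text{BPD}(v)$ has a $\inlinecross$-tile at a position $(i,j) \in Y_{\rho(v)}$, then $v$ contains the pattern $2143$ and hence is not vexillary. The first step is to unwind the two hypotheses into statements about the one-line notation of $v$. For the region, observe that $\rho(v)_i = \max\{c : (i',c) \in D_v,\ i' \geq i\}$, so $(i,j) \in Y_{\rho(v)}$ precisely when $D_v$ contains a cell $(i',c)$ with $i' \geq i$ and $c \geq j$. Recalling that $(i',c) \in D_v$ iff $v(i') > c$ and $v^{-1}(c) > i'$, this yields positions $i' \geq i$ and $i'' := v^{-1}(c) > i'$ with $v(i') > c = v(i'')$ and $c \geq j$.

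Next I would analyze the crossing itself. Pipes in a bumpless pipe dream are monotone lattice paths traveling only up and to the right, from the bottom edge to the right edge, and in a reduced diagram two pipes cross at most once. Let $V$ be the vertical strand and $H$ the horizontal strand meeting at $(i,j)$, with start columns $a$ and $b$ respectively, so that $V$ exits at row $v^{-1}(a)$ and $H$ at row $v^{-1}(b)$. Since $V$ occupies column $j$ in row $i$ and reached it monotonically from below, $a \leq j$; since $H$ runs rightward in row $i$ and only moves up thereafter, it exits at row $v^{-1}(b) \leq i$. Because the tile is a crossing, $H$ is already horizontal at column $j-1$, so one row below the crossing $H$ sits in a column $\leq j-1$ while $V$ sits in column $j$; as the two pipes do not cross again below row $i$, this left–right order persists to the bottom edge, giving $b < a$. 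Finally, since $V$ continues strictly upward past $(i,j)$ and cannot meet $H$ a second time, $V$ exits strictly above $H$, i.e. $v^{-1}(a) < v^{-1}(b)$. Writing $q := v^{-1}(a)$ and $p := v^{-1}(b)$, I obtain $q < p \leq i$ with $v(q) = a$, $v(p) = b$, and $b < a \leq j$.

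It then remains to splice the two pictures together into four positions $q < p < i' < i''$: the inequality $p < i'$ holds because $p \leq i \leq i'$ while $v(p) = b < v(i')$ forces $p \neq i'$, and $i' < i''$ by construction. Their values are $v(q) = a$, $v(p) = b$, $v(i') =: e > c$, and $v(i'') = c$, and the chain $b < a < c < e$ follows once one checks $a < c$: we have $a \leq j \leq c$, and $a = c$ is impossible since $v$ is injective and $q \neq i''$. Reading the values at these increasing positions gives the relative order $(2,1,4,3)$, namely $v(p) < v(q) < v(i'') < v(i')$, which is exactly the forbidden pattern, contradicting vexillarity. I expect the delicate step to be the pipe bookkeeping of the second paragraph — identifying which strand is horizontal, the inversion direction $v^{-1}(a) < v^{-1}(b)$, and the bound $v^{-1}(b) \leq i$ — all of which rest on the up–right monotonicity of pipes and the reducedness of $B$; the pattern-matching in the final step is then purely formal.
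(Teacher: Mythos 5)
Your proof is correct, but there is nothing internal to compare it against: the paper does not prove this lemma, importing it wholesale as Lemma~7.2 of Weigandt's paper, so your blind attempt in effect supplies the self-contained argument the paper omits. The steps all check out against the paper's conventions. The formula $\rho(v)_i = \max\{c : (i',c) \in D_v,\ i' \geq i\}$ is the correct description of the smallest partition containing $D_v$, and $(i',c) \in D_v \iff v(i') > c$ and $v^{-1}(c) > i'$ matches the paper's definition of the Rothe diagram. The load-bearing fact in your second paragraph --- that relative order of two pipes along horizontal (resp.\ vertical) sweep lines can change only at a crossing tile --- holds precisely because none of the six BPD tiles is a $\inlinebump$-tile, so pipes are monotone up-right staircases meeting only where they cross; with reducedness this gives $b < a$ and $v^{-1}(a) < v^{-1}(b) \leq i$ as you claim. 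Two boundary cases are harmless and worth a word: no crossing can occur in column $1$ (no pipe enters from the left edge), so the reference to column $j-1$ is always legitimate, and when the crossing sits in the bottom row the ``one row below'' comparison is a statement about entry columns, which your persistence argument already covers. The final splice $q < p < i' < i''$ with $v(p) < v(q) < v(i'') < v(i')$ exactly instantiates the paper's definition of the forbidden $2143$ configuration, including the value checks $a < c$ (injectivity, $q \neq i''$) and $p \neq i'$. Whether this coincides with Weigandt's original argument cannot be adjudicated from this paper, but as a replacement proof it is sound and pleasantly elementary.
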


\begin{lemma}
\label{l:huang-vexillary}
If $B$ and $B' = H_{ij}(B)$ are bumpless pipe dreams for vexillary permutations, then $B$ and $B'$ agree on $Y_{\rho(v)}$.
\end{lemma}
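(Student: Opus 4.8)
The plan is to show that every tile altered by the Huang bump $H_{ij}$ lies strictly to the southeast of the staircase boundary of $\rho := \rho(v)$, so that $B$ and $B'$ are forced to agree on $Y_\rho$. Here I take $v$ to be the (vexillary) permutation of $B$; since a Huang bump is invertible, the same reasoning applied to the inverse bump covers the partition associated to $B'$. The only external inputs are Lemma~\ref{p:BPDcrossings}, which guarantees $B(\inlinecross) \cap Y_\rho = \varnothing$, and the elementary monotonicity of partitions: if a cell $(x,y)$ satisfies $y > \rho_x$, then every $(x',y')$ with $x' \geq x$ and $y' \geq y$ satisfies $y' \geq y > \rho_x \geq \rho_{x'}$ and hence also lies outside $Y_\rho$. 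In particular, a droop anchored at an $\inlineare$-turn lying outside $Y_\rho$ modifies only cells in the rectangle to its southeast, all of which are outside $Y_\rho$.

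First I would set up the invariant that, throughout the execution of Definition~\ref{d:huang}, the current almost bumpless pipe dream has no $\inlinecross$- or $\inlinebump$-tile inside $Y_\rho$ and agrees with $B$ on $Y_\rho$; equivalently, every active tile $(x,y)$ at which a min-droop is performed satisfies $y > \rho_x$. The base case is the initial crossing of pipes $i$ and $j$, a $\inlinecross$-tile of $B$, which lies outside $Y_\rho$ by Lemma~\ref{p:BPDcrossings}. For the inductive step I would examine the three ways the active tile is updated. A min-droop from an outside anchor modifies only its southeast rectangle, outside $Y_\rho$ by the monotonicity above, so the work is in locating the new anchor. In branch 2(a), where the southeast corner $(x',y')$ of the droop becomes a $\inlinejay$-tile, the are-turn of pipe $i$ in row $x'$ is exactly the southwest corner of the droop just performed, namely $(x',y)$; it sits in the \emph{same} column $y$ as the previous anchor, with $x' > x$, so $y > \rho_x \geq \rho_{x'}$ and $(x',y)$ is again outside $Y_\rho$. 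In branch 2(b)(i) the new anchor $(x'',y'')$ is a crossing of the current almost bumpless pipe dream, which by the inductive invariant has no $\inlinecross$-tile inside $Y_\rho$, so $(x'',y'')$ is outside $Y_\rho$ too. The terminal branch 2(b)(ii) merely converts a $\inlinebump$-tile at a southeast corner into a $\inlinecross$-tile, once more outside $Y_\rho$.

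The step I expect to require the most care is branch 2(a): following pipe $i$ to its are-turn superficially looks like it moves the active tile \emph{west}, back toward the diagram. The whole argument hinges on recognizing that this are-turn coincides with the southwest corner produced by the min-droop and therefore stays in column $y$, preserving the invariant; the cross-skipping built into the min-droop does not disturb this, since the southwest corner is pinned to column $y$ no matter how far down the droop reaches. Once the invariant is in hand, it follows that $H_{ij}$ never touches a cell of $Y_\rho$, whence $B$ and $B' = H_{ij}(B)$ agree on $Y_\rho$, as desired.
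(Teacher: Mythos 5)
Your overall plan is the same as the paper's (very terse) proof: initialize at the crossing of pipes $i$ and $j$, which lies outside $Y_{\rho(v)}$ by Lemma~\ref{p:BPDcrossings}, and argue that every Monk move performed by $H_{ij}$ only touches tiles weakly southeast of an anchor that stays outside $Y_{\rho(v)}$. However, the justification you give for the step you yourself flag as the crux --- branch 2(a) --- is false in general. After a min-droop of the active pipe $p$ from $(x,y)$ into $(x',y')$, the southwest corner $(x',y)$ becomes an $\inlineare$-turn of $p$ \emph{only} when $(x',y)$ was previously a $\inlinevwire$-tile on $p$'s vertical run. The min-droop definition guarantees only that $(x',y)$ is not a $\inlinecross$-tile; it can perfectly well be $p$'s own $\inlinejay$-turn, namely when every tile of column $y$ strictly between rows $x$ and $x'$ is a crossing (or when $k=1$ and $p$'s $\inlinejay$-turn sits directly below the anchor). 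In that subcase the droop converts $(x',y)$ into an $\inlinehwire$-tile, and ``the $\inlineare$-tile of pipe $i$ in row $x'$'' of Definition~\ref{d:huang} is $p$'s \emph{pre-existing} elbow at some $(x',y_0)$ with $y_0 < y$: the anchor jumps strictly west, not south along column $y$. Your monotonicity argument then gives no control over $(x',y_0)$. Since $Y_{\rho(v)}$ excludes crossings but not elbows (the Rothe bumpless pipe dream of $v$ has elbows throughout $Y_{\rho(v)}$, for instance), nothing in your invariant prevents $(x',y_0)$ from lying inside $Y_{\rho(v)}$, and the next min-droop would blank that tile, destroying the very agreement on $Y_{\rho(v)}$ you are trying to prove.

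So the proposal is incomplete exactly where you predicted care was needed: the assertion that the new anchor is ``pinned to column $y$ no matter how far down the droop reaches'' conflates the southwest corner of the droop rectangle with the drooped pipe's $\inlineare$-turn in row $x'$, and these differ precisely in the jay-at-southwest subcase. For comparison, the paper's own proof is a one-sentence appeal to the same picture (initialize at a $\inlinecross$-tile; Monk moves affect only tiles outside $Y_{\rho(v)}$ by Lemma~\ref{p:BPDcrossings}) and does not spell this subcase out either; but a proof at the level of detail you are attempting must either rule the westward jump out under the hypothesis that \emph{both} endpoint permutations are vexillary, or show directly that the elbow $(x',y_0)$ it lands on still lies outside $Y_{\rho(v)}$. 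Absence of crossings in $Y_{\rho(v)}$ alone does not do this, so an additional idea is genuinely needed to close the gap.
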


\begin{proof}
In $H_{ij}$ we initialize at a $\inlinecross$-tile and perform basic Monk moves that only have an affect on tiles outside of $Y_{\rho(v)}$ by Lemma~\ref{p:BPDcrossings}. 
\end{proof}

\bibliographystyle{plain} 
\bibliography{references}

\end{document}